\theoremstyle{plain}
\newtheorem{theorem}{Theorem}
\newtheorem{lemma}[theorem]{Lemma}
\newtheorem{proposition}[theorem]{Proposition}
\theoremstyle{definition}
\newtheorem{definition}[theorem]{Definition}
\theoremstyle{remark}
\newtheorem{remark}[theorem]{Remark}
\newcommand{\R}{\mathbb R}
\newcommand{\C}{\mathbb C}
\newcommand{\N}{\mathbb N}
\newcommand{\Z}{\mathbb Z}
\newcommand{\h}{\mathcal H}
\newcommand{\brt}[1]{{\mathcal G_{#1}}}
\newcommand{\rt}[1]{{\mathcal I_{#1}}}
\newcommand{\der}{\mathrm{d}}
\newcommand{\eps}{\varepsilon}
\renewcommand{\phi}{\varphi}
\newcommand{\abs}[1]{\left| #1 \right|}
\newcommand{\sisus}{\operatorname{int}}
\newcommand{\spt}{\operatorname{spt}}
\newcommand{\tr}{\operatorname{tr}}
\renewcommand{\theta}{\vartheta}
\newcommand{\rfl}[1]{\widetilde{#1}}
\newcommand{\ext}[1]{\widehat{#1}}
\newcommand{\id}{\operatorname{id}}
\newcommand{\fun}[2]{{#2}^{#1}}
\newcommand{\lap}{\mathcal L}
\newcommand{\sff}[2]{\mathrm{I\!I}\!\left(#1,#2\right)}
\newcommand{\ip}[2]{\left\langle#1,#2\right\rangle}
\title{A reflection approach to the broken ray transform}
\author{Joonas Ilmavirta}
\address{Department of Mathematics and Statistics, University of Jyv\"askyl\"a, P.O. Box 35 (MaD) FI-40014 University of Jyv\"askyl\"a, Finland}
\email{joonas.ilmavirta@jyu.fi}
\date{\today}
\begin{document}

\begin{abstract}
We reduce the broken ray transform on some Riemannian manifolds (with corners) to the geodesic ray transform on another manifold, which is obtained from the original one by reflection.
We give examples of this idea and present injectivity results for the broken ray transform using corresponding earlier results for the geodesic ray transform.
Examples of manifolds where the broken ray transform is injective include Euclidean cones and parts of the spheres~$S^n$.
In addition, we introduce the periodic broken ray transform and use the reflection argument to produce examples of manifolds where it is injective.
We also give counterexamples to both periodic and nonperiodic cases.
The broken ray transform arises in Calder\'on's problem with partial data, and we give implications of our results for this application.
\end{abstract}

\subjclass[2010]{
Primary
53C65, 
78A05; 
Secondary
35R30, 
58J32
}

\keywords{Broken ray transform, X-ray transform, Calder\'on's problem, inverse problems}

\maketitle

\section{Introduction}
\label{sec:intro}

Suppose we have an unknown compactly supported continuous function in the upper half plane $\{(x,y)\in\R^2;y\geq0\}$ and we know its integrals over all broken lines in the upper half plane, which reflect at $\R\times\{0\}$ according to the usual law of geometrical optics.
We can deduce the function from these integrals by reflecting the half plane to fill the entire plane and unfolding the broken rays into straight lines.
If we let $\rfl{f}(x,y)=f(x,\abs{y})$ for $(x,y)\in\R^2$, then we may reconstruct the integral of~$\rfl{f}$ over any straight line in the plane.
By injectivity of the Radon transform in the plane, we can deduce the original function~$f$ from this information.
In this article we generalize this reflection argument to show injectivity of the broken ray transform in various domains.

Let $(M,\partial M,g)$ be an $n$-dimensional compact Riemannian manifold with boundary.
We assume that the boundary $\partial M$ is a disjoint union of~$E$, $R$, and~$C$ such that~$E$ and~$R$ are open and $C=\partial E=\partial R$ in the topology of~$\partial M$.

We consider \emph{broken rays} to be piecewise geodesic paths~$\gamma$ on~$M$ such that
\begin{itemize}
	\item $\gamma$ starts and ends in the set $\bar{E}=E\cup C$,
	\item $\gamma$ is a geodesic in $\sisus M$, and
	\item $\gamma$ is reflected on $R$ according to the usual reflection law: the angle of incidence equals the angle of reflection.
\end{itemize}
If convenient, we may also allow reflections on~$\bar{E}$; such paths can be constructed by concatenating a finite number of broken rays as defined above.
Since all broken rays have endpoints in the set~$E$, we call it the \emph{set of tomography}.

We ask the following questions:
If the integral of an unknown real valued function~$f$ on~$M$ is known over all broken rays, can~$f$ be reconstructed?
If yes, is the reconstruction stable?
How do answers to these questions depend on the regularity assumptions on~$f$, $g$ and~$M$?

To answer these questions, we reduce the problem to injectivity and regularity of the geodesic ray transform on Riemannian manifolds via reflections.
This can be done most naturally on manifolds with corners as discussed and proven in Section~\ref{sec:rfl}.
For manifolds with smooth boundary, more steps have to be taken, and they are outlined in Section~\ref{sec:spt-corner}.


We define the geodesic ray transform and the broken ray transform as follows:

\begin{definition}
\label{def:rt-brt}
For a manifold~$(M,g)$ with boundary we denote the set of all geodesics joining boundary points by~$\Gamma(M)$. For two classes of functions $F,H:M\to\R$ and any $h\in H$ we define the attenuated (geodesic) ray transform $\rt{h}:F\to\fun{\Gamma(M)}{\R}$ by
\begin{equation}
\rt{h}f(\gamma) = \int_0^L f(\gamma(t))\exp\left(\int_0^t h(\gamma(s))\der s\right)\der t,
\end{equation}
when $\gamma:[0,L]\to M$ is a geodesic in $\Gamma(M)$ with unit speed.

For a set of tomography $E\subset\partial M$, we denote the set of broken rays from~$E$ to~$E$ by $\Gamma_E(M)$ (allowing reflections on~$E$).
We define similarly the attenuated broken ray transform $\rt{h}:F\to\fun{\Gamma_E(M)}{\R}$ by
\begin{equation}
\brt{h}f(\gamma) = \int_0^L f(\gamma(t))\exp\left(\int_0^t h(\gamma(s))\der s\right)\der t.
\end{equation}

If attenuation nor the word `attenuated' is not mentioned, the attenuation is assumed to vanish identically.
\end{definition}

We also study the periodic broken ray transform where the entire boundary is reflecting and integrals of the unknown function are known over all periodic broken rays.
The precise definition is the following:

\begin{definition}
\label{def:pbrt}
Let~$M$ be a Riemannian manifold with boundary.
Let~$\Gamma$ be the set of all periodic broken rays in~$M$.
The mapping $\brt{}:C(M;\R)\to B(\Gamma,\R)$, $\brt{} f(\gamma)=\int_{\tr(\gamma)}f\der\h^1$, is the periodic broken ray transform.
\end{definition}

Using the reflection approach, we show that the broken ray transform is injective on the following manifolds (regularity requirements for functions vary, but $C_0^\infty$ suffices in each case):
\begin{itemize}
\item Euclidean domains where the reflecting part~$R$ of the boundary is part of a cone. This includes all polygons in the plane, where the reflecting part is at most two adjacent edges. Attenuation may also be included. (See Proposition~\ref{prop:ex-cone} and Remark~\ref{rmk:ex-att}.)
\item Quarter of the sphere~$S^n$ for $n\geq2$ where the set of tomography if half of the boundary. (See Proposition~\ref{prop:ex-thm}.)
\item The two dimensional hemisphere where the set of tomography is slightly larger than half of the boundary. (See Proposition~\ref{prop:ex-thm}.)
\item An octant of the sphere~$S^2$ for the periodic transform. (See Proposition~\ref{prop:ex-S/8}.)
\item The cube $[0,1]^n$, $n\geq2$, for the periodic transform. (See Proposition~\ref{prop:pbrt-cube}.)
\end{itemize}
More generic examples are given in Theorems~\ref{thm:brt2} and~\ref{thm:brt3}.
We also give the following counterexamples, for which the transform is not injective:
\begin{itemize}
\item Manifolds that contain a (generalized) reflecting tubular part. (See Proposition~\ref{prop:ctr-ex}.)
\item The disk for the periodic transform. (See Proposition~\ref{prop:ex-disk}.)
\end{itemize}

Eskin~\cite{eskin} reduced the recovery of an electromagnetic potential from partial data to injectivity of the broken ray transform, and showed the transform to be injective in a Euclidean domain with convex reflecting obstacles.
The broken ray transform has recently been studied in its own right~\cite{I:disk,H:square}.
This research has been motivated by the fact that Kenig and Salo~\cite{KS:calderon} reduced Calder\'on's problem with partial data to the injectivity of the broken ray transform.
We will discuss this in more detail in Section~\ref{sec:calderon} below.

Isakov~\cite{I:refl-calderon} used a reflection argument similar to ours for Calder\'on's problem directly.
Such arguments also appear in the study of billiards (see eg.~\cite{book-billiards}).

The recovery of a function from its ray transform is a well understood problem in a Euclidean domain (see textbooks~\cite{book-helgason,book-natterer}), and there are also a number of results on Riemannian manifolds (of which we mention~\cite{B:hyperbolic,SU:surface,UV:local-x-ray,K:spt-thm}) and also in greater generality (see e.g.~\cite{FSU:general-ray,AD:surface}).
The broken ray transform, however, is much less studied, which makes it appealing to try to reduce broken ray problems to the usual ray tomography.

It should be noted that while the geodesic ray transform is a good model for measuring the attenuation coefficient in a material with light, the broken ray transform is not a very good model if the light ray is allowed to reflect.
After a few reflections the signal is essentially lost, and reconstruction methods using broken rays with one reflection only are more appropriate for this application.
The model with one reflection (in the interior of the domain) is known as the V-line Radon transform~\cite{MNTZ:radon,A:radon}.
In the case of multiple reflections of light it is more appropriate to use the radiative transfer equation to model propagation of light.

This paper is organized as follows.
In Section~\ref{sec:calderon} we recall the relation between Calder\'on's problem and the broken ray transform and show how to translate the results in this paper to results for Calder\'on's problem.
Section~\ref{sec:ex1} gives examples of the reflection construction by proving injectivity of the broken ray transform in Euclidean cones.
More general examples on manifolds are given in Section~\ref{sec:appl}.
To prove these more general examples, reflected manifolds are constructed and studied in Section~\ref{sec:rfl}, and a generalization of the result for Euclidean cones is given in Theorem~\ref{thm:rfl}.
The examples in Section~\ref{sec:appl} are based on this theorem.
The results up to this point require that the manifold~$M$ has a corner at~$C$.
A method for removing this restriction is presented in Section~\ref{sec:spt-corner}.
In Section~\ref{sec:ex} we give examples (and counterexamples) of specific manifolds where the broken ray transform is injective.
In Section~\ref{sec:periodic} we demonstrate by example that a similar reflection approach can also be used for the periodic broken ray transform, where the entire boundary is reflecting and one integrates over periodic broken rays.

\subsection{Relation to Calder\'on's problem}
\label{sec:calderon}

As mentioned above, our main motivation for the study of the broken ray transform comes from the Calder\'on problem with partial data.
The recent result by Kenig and Salo~\cite[Theorem~2.4]{KS:calderon} states roughly the following if the broken ray transform on~$M$ with set of tomography $E\subset\partial M$ is injective:
the partial Cauchy data for the Schr\"odinger equation on a manifold~$N$ determines the potential uniquely, provided that~$N$ contains a tubular part $[0,L]\times M$ and the inaccessible part of the boundary of~$N$ is contained in $[0,L]\times(\partial M\setminus E)$.
The Calder\'on problem can be reduced to the corresponding problem for the Schr\"odinger equation. 
For basic results for the Calder\'on problem we refer to the review article~\cite{U:eit-calderon} and references therein.

As an example, we state the result for Calder\'on's problem arising from injectivity of the broken ray proven in Proposition~\ref{prop:ex-cone}.

\begin{figure}%
\includegraphics{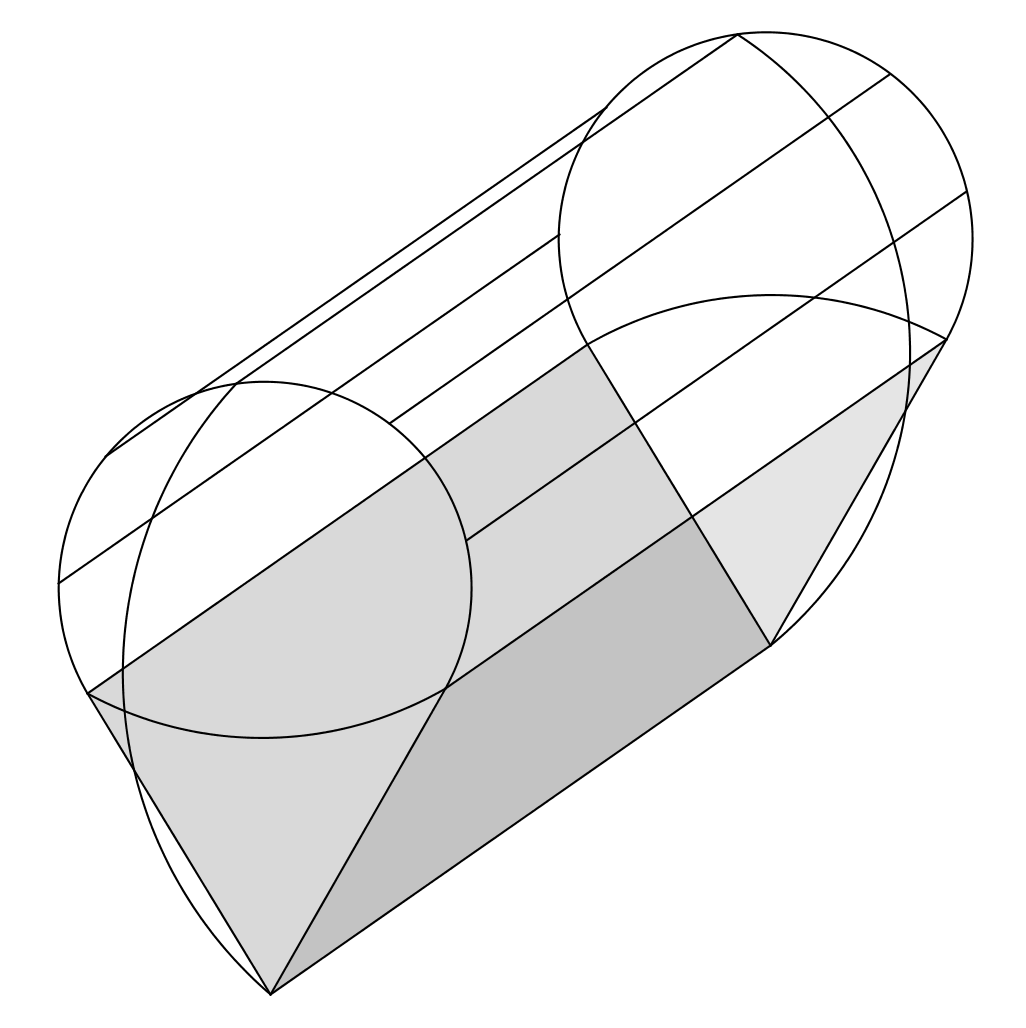}
\caption{A domain in $\R^3$, which consists of a tubular part and is closed with $C^1$ caps in both ends. The partial Cauchy data determines conductivity in such a domain, where the inaccessible part is composed of the two highlighted plates. See Theorem~\ref{thm:ex-calderon} for details.}%
\label{fig:calderon}%
\end{figure}

\begin{theorem}
\label{thm:ex-calderon}
Let $a\in\R$ be any constant, and define the gutter $G_a=\R\times C_a$, where $C_a=\{(x_2,x_3)\in\R^2;x_3>a\abs{x_2}\}$.
Let $\Omega\subset G_a$ be a bounded domain such that for some $L>0$ we have $\Omega\cap([0,L]\times\R^2)=[0,L]\times\Omega_0$.
Suppose that $\partial\Omega$ is~$C^1$ outside the line $[0,L]\times\{(0,0)\}$. 
(One such domain is sketched in Figure~\ref{fig:calderon} for~$a>0$.)

Let $\Gamma_i=[0,L]\times(\partial\Omega_0\cap\partial C_a)\subset\partial\Omega$ be the inaccessible part of the boundary and denote the accessible part by $\Gamma_a=\partial\Omega\setminus\Gamma_i$.
Then the partial Cauchy data
\begin{equation}
\{(u|_{\Gamma_a},\partial_\nu u|_{\Gamma_a});(-\Delta+q)u=0,u|_{\Gamma_i}=0,u\in L^2(\Omega),\Delta u\in L^2(\Omega)\}
\end{equation}
determines the potential $q\in C(\bar{\Omega})$ uniquely in~$\Omega$.
(The inaccessible part of the boundary is shadowed in Figure~\ref{fig:calderon}.)

This implies that the similarly partial Cauchy data for the Calder\'on problem determines a $C^2$ conductivity uniquely.
\end{theorem}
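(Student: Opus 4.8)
The plan is to assemble the statement from two results that are by now available: injectivity of the broken ray transform in Euclidean cone domains (Proposition~\ref{prop:ex-cone}, with its attenuated refinement in Remark~\ref{rmk:ex-att}) and the Kenig--Salo reduction of the partial data problem for the Schr\"odinger equation to that transform (\cite[Theorem~2.4]{KS:calderon}, recalled in Section~\ref{sec:calderon}). I would first settle the Schr\"odinger statement and only at the end pass from the Schr\"odinger potential to the conductivity.

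Take $M=\Omega_0\subset C_a$ as a bounded Euclidean domain whose boundary splits as $\partial\Omega_0=E\cup R\cup C$, where the reflecting part $R=\partial\Omega_0\cap\partial C_a$ lies on the two faces of the cone $C_a$ and the set of tomography $E$ is the complementary, accessible part. Since $R$ is contained in $\partial C_a$, i.e.\ in at most two adjacent edges meeting at the vertex $(0,0)$, the domain $\Omega_0$ is of exactly the type covered by Proposition~\ref{prop:ex-cone}; moreover the $C^1$ regularity of $\partial\Omega$ away from the vertex line $[0,L]\times\{(0,0)\}$ ensures that the only place where $\partial\Omega_0$ may fail to be smooth is the cone vertex itself. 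Consequently the broken ray transform $\brt{}$ on $\Omega_0$ with set of tomography $E$ is injective on the functions relevant below.

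Next I would check that $\Omega$ satisfies the geometric hypotheses of \cite[Theorem~2.4]{KS:calderon} for this $M$. By assumption $\Omega\cap([0,L]\times\R^2)=[0,L]\times\Omega_0=[0,L]\times M$, so $\Omega$ contains the required tubular part, whose transversal factor $\Omega_0$ is Euclidean. The inaccessible boundary is $\Gamma_i=[0,L]\times(\partial\Omega_0\cap\partial C_a)=[0,L]\times R\subset[0,L]\times(\partial M\setminus E)$, exactly the containment the theorem demands, while the two caps and the non-cone portion of the lateral boundary form the accessible part $\Gamma_a$. The Kenig--Salo theorem then gives that the listed partial Cauchy data determines $q\in C(\bar\Omega)$ uniquely: if two potentials share the data, their difference integrates to zero over every broken ray of $\Omega_0$, and the injectivity of the previous step forces it to vanish.

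Finally I would convert the Schr\"odinger result into the conductivity result through the Liouville substitution $v=\sigma^{1/2}u$, under which $\nabla\cdot(\sigma\nabla u)=0$ becomes $(-\Delta+q)v=0$ with $q=\sigma^{-1/2}\Delta\sigma^{1/2}$; a $C^2$ conductivity yields a continuous potential, so the requirement $q\in C(\bar\Omega)$ is met, and the partial Cauchy data transform correctly because $\sigma$ and its normal derivative on the accessible boundary are recovered from the data. I expect the main difficulty to be not a single hard estimate but the simultaneous matching of all the bookkeeping: that the decomposition $E\cup R\cup C$ of $\partial\Omega_0$, the corner at the cone vertex, and the function class in which Proposition~\ref{prop:ex-cone} gives injectivity are all compatible with what \cite[Theorem~2.4]{KS:calderon} consumes and produces---in particular that injectivity is available for the merely continuous difference of two potentials and not only for $C_0^\infty$ data.
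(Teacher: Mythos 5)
Your proposal is correct and takes essentially the same route as the paper: apply the Kenig--Salo reduction \cite[Theorem~2.4]{KS:calderon} to pass to the broken ray transform on the transversal cone domain $\Omega_0$ with set of tomography $E=\partial\Omega_0\setminus\partial C_a$, then conclude by Proposition~\ref{prop:ex-cone}(2) together with Remark~\ref{rmk:ex-att}. The only point to state more carefully is that the reduction actually yields vanishing of the \emph{attenuated} broken ray transform, with every constant attenuation, of the transversal Fourier modes of the difference of potentials, so the injectivity you need is precisely the constant-attenuation version from Remark~\ref{rmk:ex-att} rather than the unattenuated statement in your phrase ``integrates to zero over every broken ray''.
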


\begin{proof}
By the result~\cite[Theorem~2.4]{KS:calderon} it suffices to show that the broken ray transform is injective on the transversal manifold~$\Omega_0$ with all constant attenuations, where the set of tomography is $E=\partial\Omega_0\setminus\partial C_a$.
But this is done in Proposition~\ref{prop:ex-cone}(2) below; the parameters $a$ and $m$ are related by $a=\arctan(\frac{\pi}{2}(1-1/m))$.
Although we only prove Proposition~\ref{prop:ex-cone}(2) without attenuation, the same proof is valid for any constant attenuation as noted in Remark~\ref{rmk:ex-att}.
\end{proof}

Theorem~\ref{thm:ex-calderon} was proven for~$a=0$ (half space) by Isakov~\cite{I:refl-calderon} by another reflection method.
For~$a\leq0$ it was proven by Kenig and Salo~\cite{KS:calderon}; in this case it suffices to study broken rays without reflections.
Our result generalizes the previous ones to~$a>0$.
Injectivity results for the broken ray transform can be turned into partial data results for Calder\'on's problem in corresponding tubular domains; Theorem~\ref{thm:ex-calderon} is an example of this.

\begin{remark}
A partial version of Theorem~\ref{thm:ex-calderon} remains true if the corner of the transversal domain~$\Omega_0$ is smoothed out as follows.
Let~$\Omega_0$ be a transversal domain satisfying the assumptions of the theorem.
Suppose then~$\Omega_0'\subset\Omega_0$ is a subdomain such that for some~$\eps\in(0,r)$ we have $\Omega_0\setminus B(0,\eps)=\Omega_0'\setminus B(0,\eps)$.
Now $E=\partial\Omega_0\setminus\partial C_a\subset\partial\Omega_0'$.
As demonstrated at the end of Section~\ref{sec:rmk}, if the broken ray transform of~$f\in C(\overline{\Omega_0'})$ is known, one can determine~$f$ outside~$B(0,\eps)$.
Thus for a domain~$\Omega'\subset\R^3$ with transversal domain~$\Omega_0'$ as in the theorem the partial Cauchy data with $\Gamma_i=[0,L]\times(\partial\Omega_0'\setminus E)$ determines~$q$ outside the tube $[0,L]\times B(0,\eps)$.
The domain~$\Omega'$ can have a smooth boundary, unlike~$\Omega$.

Full recovery is possible if the smoothened tip of the cone is in not reflective but available for measurements; see Lemma~\ref{lma:subset} below.
\end{remark}

\section{First examples}
\label{sec:ex1}

We present some examples in the following proposition which demonstrate the idea that we wish to generalize.
We begin with a lemma that contains a general observation.
Here we denote by~$C_{pw}(\bar\Omega,\R)$ the piecewise continuous functions from~$\bar\Omega$ to~$\R$.

\begin{lemma}
\label{lma:subset}
Suppose the broken ray transform on a domain $\Omega\subset\R^n$ is injective on~$C_{pw}(\bar\Omega,\R)$ with some set of tomography~$E$.
Then it is also injective on any subdomain $\Omega'\subset\Omega$ (now on~$C_{pw}(\overline{\Omega'},\R)$) with a new set of tomography $E'=\partial\Omega'\setminus(\partial\Omega\setminus E)$.
\end{lemma}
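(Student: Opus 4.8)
\section*{Proof proposal}

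The plan is to reduce injectivity on $\Omega'$ to the assumed injectivity on $\Omega$ by extending functions by zero and then decomposing the broken rays of $\Omega$ into broken rays of $\Omega'$. Concretely, suppose $f'\in C_{pw}(\overline{\Omega'},\R)$ satisfies $\brt{}f'(\gamma')=0$ for every $\gamma'\in\Gamma_{E'}(\Omega')$, and extend $f'$ by zero to a function $f$ on $\Omega$, setting $f=f'$ on $\Omega'$ and $f=0$ on $\Omega\setminus\Omega'$. Since $f'$ is piecewise continuous and the extension only adds a jump across $\partial\Omega'$, we have $f\in C_{pw}(\bar\Omega,\R)$; this is precisely why the statement is phrased for piecewise continuous rather than continuous functions. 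It then suffices to show $\brt{}f(\gamma)=0$ for all $\gamma\in\Gamma_E(\Omega)$, since injectivity on $\Omega$ would force $f\equiv0$ and hence $f'\equiv0$.

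Write $R=\partial\Omega\setminus E$ for the reflecting part of $\Omega$. The reflecting part of $\Omega'$ is then $R'=\partial\Omega'\cap R$, and by definition $E'=\partial\Omega'\setminus R'$ splits into the old tomography points $E\cap\partial\Omega'$ together with the new boundary $\partial\Omega'\cap\sisus\Omega$ created by passing to the subdomain. I would first record the natural compatibility hypothesis (satisfied in all our applications, e.g.\ when $\Omega$ and $\Omega'$ agree outside a small ball) that $\Omega'$ coincides with $\Omega$ in a neighbourhood of each point of $R'$, so that $T_p\partial\Omega'=T_p\partial\Omega$ there and a reflection off $\partial\Omega$ is literally a reflection off $\partial\Omega'$. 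Now fix a generic $\gamma\in\Gamma_E(\Omega)$, meaning one that meets neither the corner nor $\partial\Omega'$ tangentially; this excludes only a set of measure zero. The key observation is how such a $\gamma$ meets $\partial\Omega'$: at points of $R'$ it merely reflects and stays in $\overline{\Omega'}$, at points of $\partial\Omega'\cap\sisus\Omega$ it crosses transversally, and it never meets $E\cap\partial\Omega'$ in its interior, because a broken ray touches $E$ only at its two endpoints. Consequently the finitely many maximal subarcs of $\gamma$ lying in $\overline{\Omega'}$ are separated only by excursions through $E'$; each subarc is a geodesic broken solely by reflections on $R'$, and both of its endpoints lie in $E'$ (each being either a transversal crossing of $\partial\Omega'\cap\sisus\Omega$ or a global endpoint of $\gamma$ in $E\cap\partial\Omega'$). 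Hence every subarc is a genuine broken ray $\gamma_i'\in\Gamma_{E'}(\Omega')$.

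Since $f$ vanishes off $\Omega'$, additivity of the integral over these subarcs gives $\brt{}f(\gamma)=\sum_i\brt{}f'(\gamma_i')=0$ for every generic $\gamma$; as $\brt{}f(\gamma)$ depends continuously on $\gamma$ away from the excluded null set, the transform $\brt{}f$ then vanishes on all of $\Gamma_E(\Omega)$. Injectivity on $\Omega$ yields $f\equiv0$, whence $f'\equiv0$. I expect the main obstacle to be the bookkeeping in the geometric decomposition, specifically verifying that the reflections of $\gamma$ at $R'$ are genuine $\Omega'$-reflections (this is exactly where the local agreement of $\Omega'$ with $\Omega$ near $R'$ is used) and that each subarc endpoint lands in $E'$ rather than $R'$. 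The remaining analytic points --- piecewise continuity of $f$, that degenerate broken rays form a null set, and that continuity of the transform upgrades vanishing \emph{a.e.} to vanishing everywhere --- are routine and can be dispatched briefly.
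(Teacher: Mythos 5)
Your proof is correct and follows essentially the same route as the paper's: extend $f'$ by zero to $g\in C_{pw}(\bar\Omega,\R)$, decompose each broken ray of $\Omega$ into subarcs that are broken rays of $\Omega'$ with endpoints on $E'$, conclude that the transform of $g$ vanishes, and invoke the assumed injectivity on $\Omega$. The additional care you take (genericity of rays, local agreement of $\partial\Omega'$ with $\partial\Omega$ near $R'$, the continuity upgrade) only fleshes out details the paper's proof leaves implicit.
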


The lemma is true also for manifolds and~$L^p$ functions and the proof is the same.

\begin{proof}[Proof of Lemma~\ref{lma:subset}]
Suppose $f\in C_{pw}(\overline{\Omega'},\R)$ integrates to zero over all broken rays.
Define then $g:\Omega\to\R$ by letting~$g=f$ on~$\Omega'$ and~$g=0$ on~$\Omega\setminus\Omega'$.
We clearly have $g\in C_{pw}(\bar\Omega,\R)$.

Take now any broken ray in~$\Omega$.
Intersecting it with~$\Omega'$ gives segments which are broken rays in~$\Omega'$ (endpoints on~$E'$).
Using this decomposition and the definition of~$g$, we observe that the broken ray transform of~$g$ vanishes.
By assumption this implies~$g=0$ and hence~$f=0$.
\end{proof}

\begin{proposition}
\label{prop:ex-cone}
The broken ray transform is injective in the following Euclidean domains~$\Omega$ for functions in $C_{pw}(\bar{\Omega},\R)$ (and thus also on $C(\bar{\Omega},\R)$):
\begin{enumerate}
\item A domain $\Omega\subset\R^2$ with $\partial\Omega\setminus E$ on a cone with opening angle $\pi/m$, $m\in\N$, and any set of tomography~$E$. For example in polar coordinates
\begin{equation}
\label{eq:ex1}
\Omega=\{(r,\theta):0<\theta<\pi/m,0<r<h(\theta)\}
\end{equation}
for continuous functions $h>0$, where $E=\{(h(\theta),\theta):0<\theta<\pi/m\}$.
\item The previous example works for all $m\geq1/2$ without the restriction $m\in\N$.
(That is, the opening angle may be anything in the range~$(0,2\pi]$.)
\item Also higher dimensional cones
\begin{equation}
\Omega=\{(x',x)\in\R^{n-1}\times\R:\abs{x'}<kx,0<\abs{(x',x)}<h((x',x)/\abs{(x',x)})\}
\end{equation}
for continuous functions $h:S^{n-1}\to(0,\infty)$ and parameters $k\in(0,\infty)$ such that $\pi/2\arctan(k)\in\N$.
\item 
The previous example works for all $k\in\R$ even without the restriction that $\pi/2\arctan(k)\in\N$.
(That is, the opening angle may be anything in the range~$(0,2\pi]$.)
\end{enumerate}
These injectivity results are also true for any subdomain $\Omega'\subset\Omega$ with new reflecting set $R'=R\cup\partial\Omega'$.
In particular, the cone need not contain a nonsmooth tip.
\end{proposition}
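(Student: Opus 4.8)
The plan is to realize the reflection principle of the introduction: reflecting the reflecting walls~$R$ straightens every broken ray into a geodesic of a flat reflected space, so that $\brt{}f$ along a broken ray equals the ordinary ray transform of the reflected function~$\rfl f$ along the corresponding geodesic, and injectivity of the ray transform on the reflected space forces $\rfl f=0$ and hence $f=0$. It therefore suffices to treat the four model cones; the closing assertion about a subdomain $\Omega'\subset\Omega$ then follows from Lemma~\ref{lma:subset}, which in particular permits cutting off or smoothing the apex without losing injectivity.

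Consider first the planar cone of~(1), whose reflecting walls are the rays $\theta=0$ and $\theta=\pi/m$. The reflections across these two lines generate the dihedral group of order~$2m$, and since the opening angle $\pi/m$ divides $2\pi$ exactly, the $2m$ images of~$\Omega$ tile the plane with disjoint interiors, the walls becoming interior radial rays and the images of~$E$ forming the entire outer boundary of the star-shaped region $\rfl\Omega=\{0<r<\rfl h(\theta)\}$. I would set $\rfl f$ equal to~$f$ on each tile via the group; reflection across a line glues continuously, so $\rfl f$ is piecewise continuous (continuous when~$f$ is) and compactly supported. Unfolding straightens each broken ray into a line; conversely, for any line~$\ell$ not through the origin every connected piece of $\ell\cap\rfl\Omega$ folds back to a broken ray with endpoints on~$E$ (the only surviving boundary), so that $\radon\rfl f(\ell)$ is a finite sum of values of $\brt{}f$ and hence vanishes. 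Lines through the origin form a null set, so $\radon\rfl f=0$ almost everywhere, and the classical injectivity of the planar Radon transform on compactly supported integrable functions gives $\rfl f=0$, whence $f=0$.

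I would reduce the higher-dimensional cones~(3) and~(4) to the two-dimensional statements by rotational symmetry about the axis (granting part~(2), established below). Write the solid cone as $K=\{\abs{x'}<kx\}$, so that $\Omega\subset K$, and foliate $\R^n$ by the two-planes~$P$ through the axis. Each slice $\Omega\cap P$ is a planar cone with the same opening angle as~$K$; under the hypothesis of~(3) this angle is $\pi/m$ with $m=\pi/2\arctan(k)\in\N$, and in~(4) it is an arbitrary angle in $(0,2\pi]$. Its two reflecting sides lie in $\partial K\cap P\subset R$ and its outer arc lies in~$E$. The decisive geometric fact is that $K$ is a surface of revolution, so its normal at any boundary point lies in the meridian plane through that point; hence a ray started in~$P$ with velocity in~$P$ reflects off~$\partial K$ back into~$P$, and every planar broken ray of $\Omega\cap P$ is a genuine broken ray of~$\Omega$. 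Restricting $\brt{}f$ to these planar broken rays and applying part~(1) (respectively part~(2)) recovers $f|_P$ for each~$P$, and sweeping~$P$ around the axis recovers~$f$ on all of~$\Omega$.

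It remains to prove~(2), the non-integral planar cone, which I expect to be the main obstacle. For non-integral~$m$ the $2m$ reflected copies no longer tile~$\R^2$, so there is no flat model in the plane; instead I would develop the reflection intrinsically, reflecting~$f$ across both walls to obtain $\rfl f$ on a Euclidean cone of total angle $2\pi/m$, flat except for a single conical singularity at the apex, on which broken rays again unfold to geodesics between boundary points. The remaining task is injectivity of the geodesic ray transform on this rotationally symmetric cone; I would obtain it by expanding $\rfl f$ into angular Fourier modes and reducing the transform on each mode to a one-dimensional Abel-type equation in the radial variable, which is injective (this is the mechanism abstracted in Theorem~\ref{thm:rfl}). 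The two points needing care are the conical singularity---the geodesics that meet or wind around the apex form an exceptional set that must be shown negligible and absorbed by continuity---and the mere piecewise continuity of~$\rfl f$; both are harmless because the relevant injectivity holds for compactly supported integrable functions and the exceptional geodesics have measure zero.
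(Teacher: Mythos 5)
Your treatment of parts (1), (3) and (4) coincides with the paper's: dihedral unfolding into a tiling of the plane plus injectivity of the planar Radon transform for (1), and reduction of the higher-dimensional cones to planar slices through the axis for (3)--(4) (your justification via the normal lying in the meridian plane is in fact more careful than the paper's blanket claim that every broken ray is planar, which is not needed and not quite true). The closing subdomain remark via Lemma~\ref{lma:subset} also matches. Where you genuinely diverge is part (2). The paper stays extrinsic: it reflects only enough copies of~$\Omega$ to cover an angular sector of size at least~$\pi$ inside~$\R^2$, fills the leftover (convex, angle $\le\pi$) sector with a compact cone~$C$ of radius $R\ge\max h$, observes that every line avoiding~$C$ integrates~$\rfl{f}$ to a sum of broken-ray integrals, and invokes Helgason's support theorem to conclude $\rfl{f}=0$ off~$C$, hence on~$\rfl{\Omega}$. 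You instead pass to the intrinsic double of the wedge, a flat cone of total angle $2\pi/m$ with a conical singularity, and propose to prove injectivity of its geodesic ray transform by angular Fourier modes and a Cormack/Abel-type radial equation. That route can be made to work: the $n$-th mode satisfies $\int_p^\infty f_n(r)\cos\bigl(n\omega\arccos(p/r)\bigr)\,r\,(r^2-p^2)^{-1/2}\der r=0$ with $\omega=2\pi/(2\pi/m)=m$ generally non-integer, and the kernel equals $1+O(r-p)$ times the Abel weight, so the usual outside-in Volterra argument on compactly supported functions applies. But be aware that this is the entire content of part (2) and you have only asserted it; non-integer-order Cormack injectivity is not an off-the-shelf citation, whereas the paper's Helgason argument is two lines and already covers piecewise continuous functions. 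Also, this Fourier--Abel mechanism is not what Theorem~\ref{thm:rfl} abstracts --- that theorem only encodes the unfolding step (broken ray transform to geodesic ray transform on the reflected space), not the injectivity of the latter, which in the paper is always imported from an external result (Radon, Helgason, \cite{SU:surface}, \cite{UV:local-x-ray}). In short: correct, same skeleton, but your part (2) trades the paper's short support-theorem argument for a harder intrinsic inversion whose key analytic step still needs to be written out.
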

\begin{proof}
The last remark follows from Lemma~\ref{lma:subset}.


(1)
Reflect~$\Omega$ over one side of the cone.
Then reflect this new copy of~$\Omega$ over the other side of the cone, and carry on until there are $2m$ copies of the original domain.
Because the opening angle is $\pi/m$, the total angle adds up to $2m\times\pi/m=2\pi$ and the copies of~$\Omega$ form a domain $\rfl{\Omega}$.
This reflection process is illustrated in Figs.~\ref{fig:cone1-pre} and~\ref{fig:cone1-post}; the domain $\Omega$ in Fig.~\ref{fig:cone1-pre} with $m=3$ is copied and reflected to constitute the domain $\rfl{\Omega}$ in Fig.~\ref{fig:cone1-post}.
Note that if one continues the construction by reflecting the last ($2m$th) copy of~$\Omega$, one ends up with the original domain in its original position.

Let $p:\bar{\rfl{\Omega}}\to\bar{\Omega}$ be the natural projection map that undoes the copying, rotating and reflecting done in the construction of~$\rfl{\Omega}$.
Let $f\in C_{pw}(\bar{\Omega},\R)$ be any function.
We define a reflected version of~$f$ by letting $\rfl{f}=f\circ p\in C_{pw}(\bar{\rfl{\Omega}},\R)$.

\begin{figure}%
\includegraphics{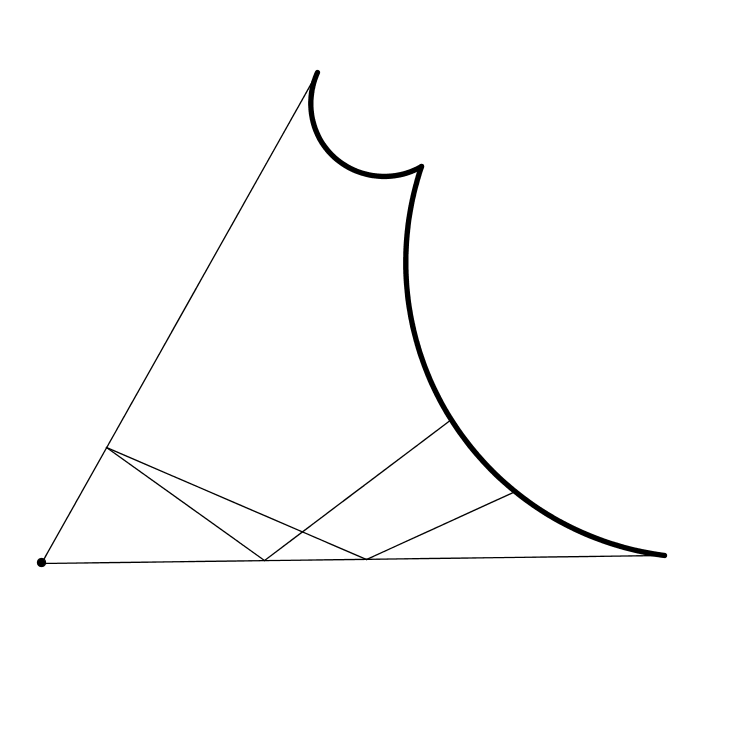}
\caption{A conical domain $\Omega$ with opening angle $\pi/3$ and a broken ray. See the proof of Proposition~\ref{prop:ex-cone}(1) for details.}%
\label{fig:cone1-pre}%
\end{figure}

\begin{figure}%
\includegraphics{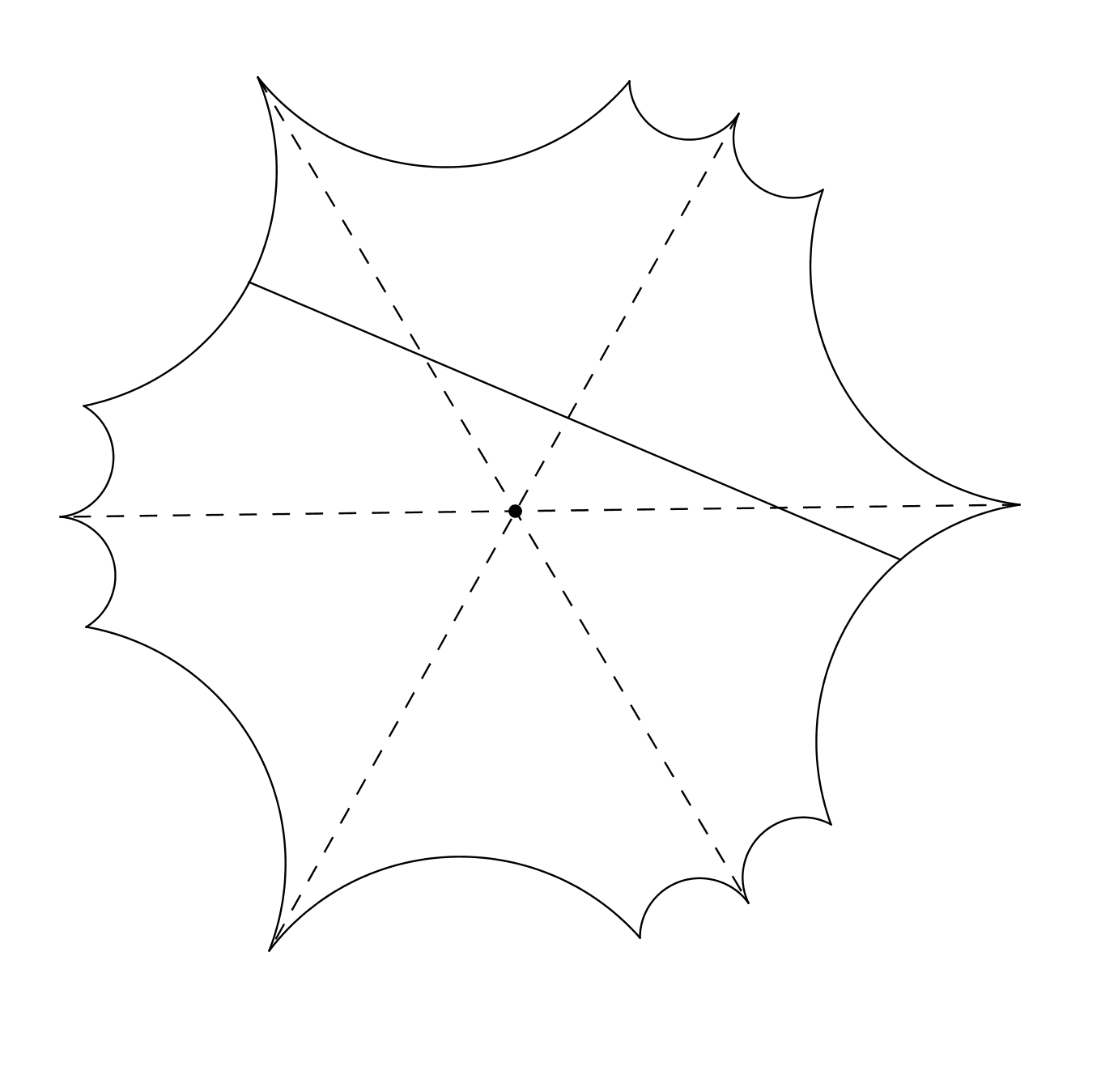}
\caption{A reflected domain $\rfl{\Omega}$ and a line. The line corresponds to the broken ray in Fig.~\ref{fig:cone1-pre}. See the proof of Proposition~\ref{prop:ex-cone}(1) for details.}%
\label{fig:cone1-post}%
\end{figure}

Take any line $\phi$ in $\rfl{\Omega}$ that does not meet the origin (for example the one in Fig.~\ref{fig:cone1-post}).
Let $\gamma=p\circ\phi$; then $\gamma$ is a broken ray in $\Omega$ (the broken ray in Fig.~\ref{fig:cone1-pre} corresponds to the line in Fig.~\ref{fig:cone1-post} in this way).
Because of this correspondence between $\gamma$ and~$\phi$ we write $\phi=\rfl{\gamma}$.
This correspondence of lines and broken rays is illustrated in Figs.~\ref{fig:cone1-pre} and~\ref{fig:cone1-post}.
(Note that for each broken ray~$\gamma$ in~$\Omega$ that does not hit the tip of the cone there are $2m$ lines~$\rfl{\gamma}$ in~$\rfl{\Omega}$.)

Since we have
\begin{equation}
\label{eq:rfl-int}
\int_{\rfl{\gamma}}\rfl{f}\der s
=
\int_{\gamma}{f}\der s,
\end{equation}
we may construct the Radon transform of~$\rfl{f}$ from the broken ray transform of~$f$.
In particular, vanishing broken ray transform of~$f$ in~$\Omega$ implies that the Radon transform of~$\rfl{f}$ vanishes.

Since the Radon transform is injective, vanishing broken ray transform of~$f$ implies that $\rfl{f}=0$ and so~$f=0$.

(2) Just like above, reflect and copy the domain in the plane.
The plane cannot be filled as nicely, but it is enough to construct a cone~$\rfl{\Omega}$ of opening angle at least~$\pi$.
Now~$\rfl{\Omega}$ does not cover all angles as in part~(1) above, but this problem can be bypassed.

Let $R\geq\max h$.
We cover the angle left out by~$\rfl{\Omega}$ by a compact cone~$C$ with radius~$R$ centered at the same point as copies of~$\Omega$ (the origin).
This construction is demonstrated in Figs.~\ref{fig:cone2-pre} and~\ref{fig:cone2-post} (analogously to Figs.~\ref{fig:cone1-pre} and~\ref{fig:cone1-post}).


\begin{figure}%
\includegraphics{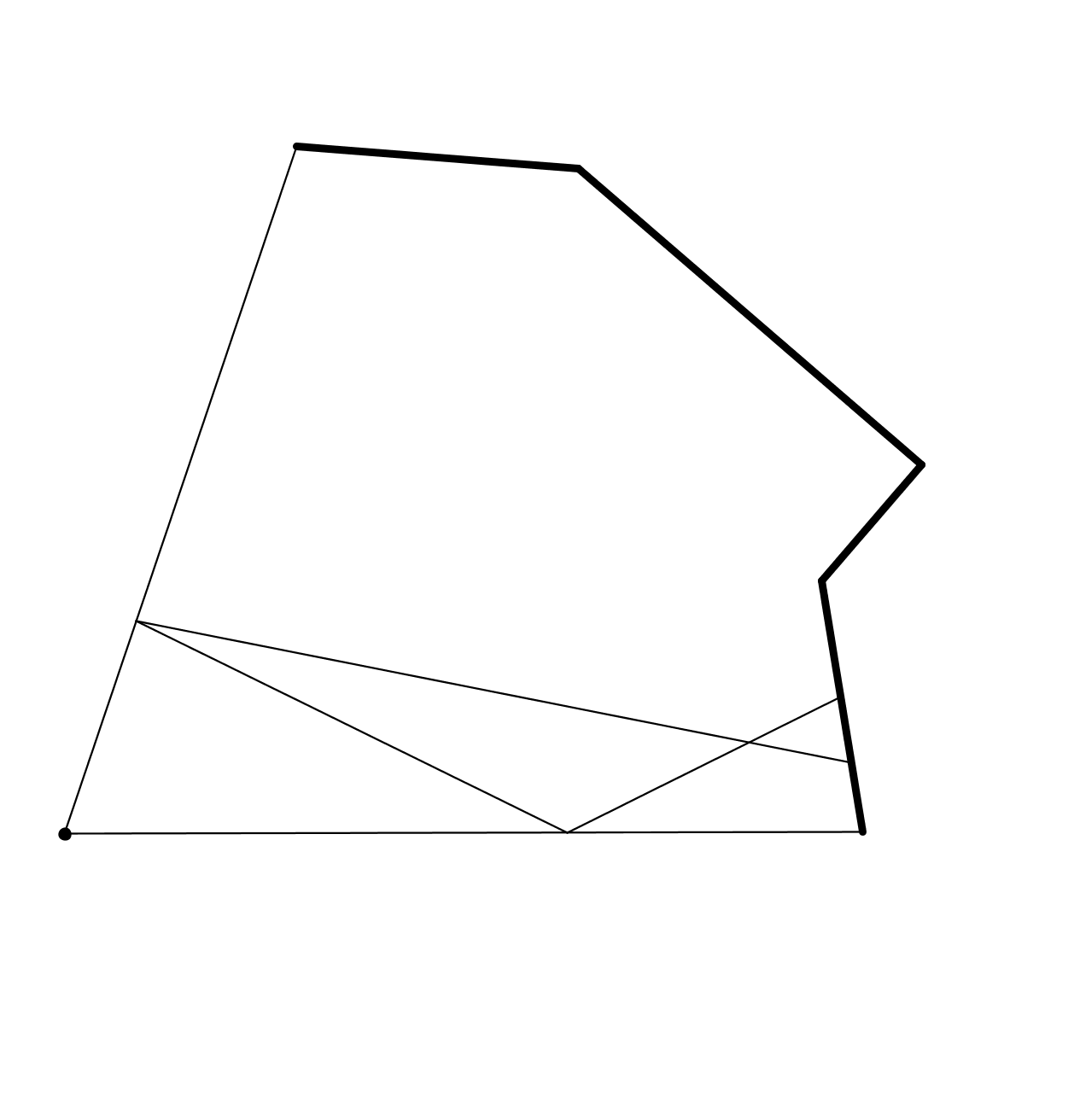}
\caption{A conical domain $\Omega$ with a broken ray. See the proof of Proposition~\ref{prop:ex-cone}(2) for details.}%
\label{fig:cone2-pre}%
\end{figure}

\begin{figure}%
\includegraphics[width=\columnwidth]{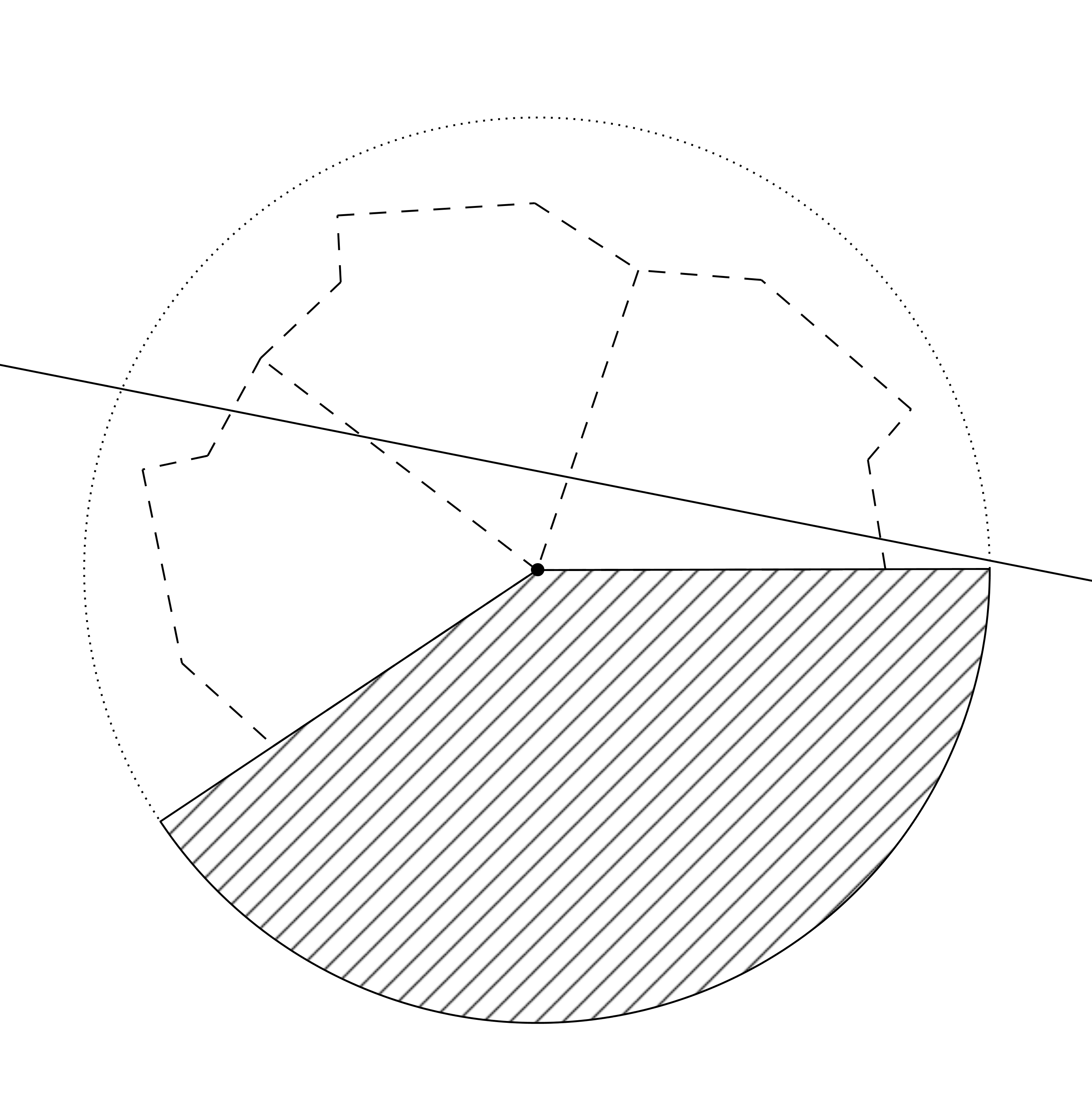}
\caption{The reflected domain $\rfl{\Omega}$ contains three copies of~$\Omega$. We fill in the angle with a cone. The line corresponds to the broken ray in Fig.~\ref{fig:cone2-pre}. See the proof of Proposition~\ref{prop:ex-cone}(2) for details.}%
\label{fig:cone2-post}%
\end{figure}

For $f:\bar{\Omega}\to\R$, define $\rfl{f}$ in $\rfl{\Omega}$ as above and extend by zero to~$\R^2$.
Suppose the broken ray transform of $f$ vanishes.

As in the proof of part~(1), the vanishing broken ray transform of~$f$ in~$\Omega$ implies that the integral of~$\rfl{f}$ over a line $L\subset\R^2$ is zero whenever~$L\cap C=\emptyset$.
(One such line~$L$ is drawn in Fig.~\ref{fig:cone2-post}.)
By Helgason's support theorem~\cite[Theorem~2.6]{book-helgason} (and a little mollification argument, see e.g.~\cite[proof of proposition~5]{I:disk}) $\rfl{f}$ vanishes outside~$C$ and thus especially in~$\rfl{\Omega}$.
Thus the original function~$f$ vanishes in~$\Omega$.

(3) The case $n=1$ is literally part~(1) and others can be reduced to it.

Let $P_2^n$ be the Grassmannian of all two dimensional subspaces of~$\R^n$.
For symmetry reasons every broken ray in the cone $\Omega$ is confined to some $E\in P_2^n$.
For any $E\in P_2^n$ part~(1) shows injectivity in the planar cone $\Omega\cup E$; $\pi/2\arctan(k)=m$ guarantees that the opening angle is $\pi/m$.
Since $\Omega=\bigcup_{E\in P_2^n}E\cap\Omega$, the broken ray transform is injective in~$\Omega$.

(4) Follows from part~(2) just like in part~(3) follows from part~(1).
\end{proof}

In the above proposition parts~(1) and~(3) follow trivially from~(2) and~(4), but we present them separately since their proofs are somewhat different.
In particular,~(2) and~(4) are based on Helgason's support theorem. 

The broken ray transform is also injective on unbounded cones of the types~(1) and~(3) with $h\equiv\infty$, since the Radon transform is injective in the whole plane.
The set of tomography~$E$ may be taken to be ``at infinity'' in the sense that broken rays are allowed to tend to infinity.
Integrability assumptions are then needed for the unknown functions.

\subsection{Remarks}
\label{sec:rmk}

It is important to notice that the gluing in Proposition~\ref{prop:ex-cone} was done along flat parts of the boundary (line segments in the plane).
A particular case of Proposition~\ref{prop:ex-cone}(3) is the half space $\R^{n-1}\times[0,\infty)$ (with $k=\infty$); the reflection is simply obtained by $\rfl{f}(x',x)=f(x',\abs{x})$ and there are no corners (or the corners have angle~$\pi$).
We will reflect and glue together manifolds in Section~\ref{sec:rfl} below, and flatness of the gluing boundary will lead to regularity of the reflected manifold (see Lemma~\ref{lma:rfl-reg}).
We recall that in Isakov's reflection method for the Calder\'on problem~\cite{I:refl-calderon} reflection is made along a part of the boundary which is flat (hyperplane) or can be conformally flattened (sphere).

The recent result by Hubenthal in the square~\cite{H:square} heavily relies on the geometry of the square: reflections are done at straight lines and corners have angle~$\pi/4$.
By Proposition~\ref{prop:ex-cone}(1) the broken ray transform is injective in the square, provided that the set of tomography~$E$ contains two adjacent edges of the square.
Similarly the broken ray transform is injective in any polygon if the reflecting part~$R$ of the boundary contains at most two adjacent edges.
Although these results are different in their formulation and methods of proof, the underlying geometrical structure of the square is heavily relied on.

The shape of the domain $\Omega$ in dimensions three and higher can be other than the cone in Proposition~\ref{prop:ex-cone}(5).
For example, if $\Omega$ is a cube with three adjacent faces as the set of tomography, eight copies of it can be glued together to form a bigger cube in a fashion similar to gluing four squares to form a bigger square in dimension two.
(Such a construction is used to prove Proposition~\ref{prop:pbrt-cube}.)
The correspondence between lines and broken rays is the same.
We do not elaborate on all the possibilities here; we only wish to present the idea in a fair amount of generality.

In the discussion below we will focus on the analogue of the half space.
Corners can be allowed, but for the sake of simplicity we shall not allow them.
It is the author's belief that if the corners add up nicely as in Proposition~\ref{prop:ex-cone}(1), Theorem~\ref{thm:rfl} remains true.
The technical difficulty lies in the fact that one needs some kind of ``corner normal coordinates'' at a corner point of a manifold.
For more general corners one needs something to replace Helgason's support theorem in the proof of Proposition~\ref{prop:ex-cone}(2).
Support theorems as simple and powerful as Helgason's seem not to be available for general manifolds.

We will, however, use support theorems on manifolds for a different purpose.
The results of~\cite{UV:local-x-ray} and~\cite{K:spt-thm} are therefore given in Section~\ref{sec:spt-corner}.

The examples above were such that the domain $\Omega$ was constructed as a submanifold of a particularly nice domain in $\R^2$ so that the construction reduces the problem to a planar one.
The examples in Section~\ref{sec:ex} are also of this type.
If we start with an arbitrary manifold, the resulting manifold is not generally any simpler than the one we started with.

If the angle in Proposition~\ref{prop:ex-cone}(2) is a reflex angle (between~$\pi$ and~$2\pi$), there is no need for a reflection construction.
Helgason's support theorem immediately gives injectivity for the broken ray transform, and reflected rays need not be considered at all.
This observation holds true whenever the reflector is concave.

The cone in Proposition~\ref{prop:ex-cone}(2) need not have an angle.
If the domain looks like the domain of Proposition~\ref{prop:ex-cone}(2) outside some neighborhood of the origin, we can reconstruct~$f$ outside some (possibly larger) neighborhood of the origin.
This can be done with the same method; Helgason's support theorem tells that~$\rfl{f}$ vanishes outside the convex hull of the set (and its copies) where the boundary of~$\Omega$ is not conical.

\section{Two applications on manifolds}
\label{sec:appl}

We use Theorem~\ref{thm:rfl} to give two theorems of injectivity of the broken ray transform on in a fairly large class of manifolds.
In brief, Theorem~\ref{thm:rfl} tells that injectivity of the broken ray transform on a manifold can be reduced to the injectivity of the geodesic ray transform on a reflected manifold in analogue to the half plane example given in the beginning of this article.
The notation and necessary results are given in Section~\ref{sec:rfl} below.
The purpose of this section is to motivate the general construction.
More examples are given in Section~\ref{sec:ex}.

In the proofs below, $\rfl{A}$ is a doubled version of the manifold~$A$ obtained by reflecting with respect to~$\bar{R}$.
The construction is illustrated in Fig.~\ref{fig:rfl-constr} and given in detail in Section~\ref{sec:rfl-constr}.


\begin{theorem}
\label{thm:brt2}
Let $M$ satisfy the following assumptions:
\begin{itemize}
\item $M$ is a smooth Riemannian surace with corners, and the boundary is a disjoint union of the sets~$E$, $R$, and~$C$.
\item The open smooth boundary components~$E$ and~$R$ meet orthogonally at~$C$.
\item $E$ is strictly convex and~$R$ is $\infty$-flat.
\item The local boundary defining functions of~$M$ near~$C$ can be chosen to be $\infty$-even at~$R$.
\item For any two points on~$\bar{E}$ (but not both of them on $C$) and a chosen parity (odd or even), there is a unique broken ray in~$M$ with the chosen parity (even or odd number of reflections) joining the points, and this geodesic depends smoothly on the endpoints.
\item If both endpoints lie in~$C$, the geodesic is contained in~$R$.
\item The normal derivatives of odd orders with respect to endpoints of the geodesic vanish at~$C$.
\end{itemize}


Then the broken ray transform in~$\bar{M}$ is injective with set of tomography~$E$ for smooth functions in~$M$ which are $\infty$-even at~$R$.
\end{theorem}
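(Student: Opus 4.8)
The plan is to reduce the broken ray transform on $M$ to the geodesic ray transform on the reflected surface $\rfl{M}$ and then to invoke a known injectivity result for the geodesic ray transform on simple surfaces. First I would form the double $\rfl{M}$ by gluing a mirror copy $M'$ of $M$ to $M$ along the reflecting boundary $\bar{R}$, as in the construction of Section~\ref{sec:rfl-constr}, and let $p\colon\rfl{M}\to M$ be the folding projection. Then $\partial\rfl{M}$ is the disjoint double $E\cup E'$ of the set of tomography (joined at $C$), with $R$ becoming an interior hypersurface. A broken ray in $M$ with endpoints on $\bar{E}$ unfolds under $p$ to a geodesic of $\rfl{M}$, each reflection on $R$ turning into a transversal passage through the gluing locus; hence a broken ray with an even (resp.\ odd) number of reflections corresponds to a geodesic whose two endpoints lie in the same (resp.\ the opposite) copy of $E$. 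This is exactly the parity dichotomy appearing in the hypotheses.

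The first thing to check is that $\rfl{M}$ is a genuine smooth Riemannian surface, and this is where the regularity assumptions enter: since $R$ is $\infty$-flat and the local boundary defining functions at $C$ can be chosen $\infty$-even at $R$, the metric and its mirror image agree to infinite order across $\bar{R}$, so the doubled metric is smooth (Lemma~\ref{lma:rfl-reg}). In particular a function $f$ on $M$ that is $\infty$-even at $R$ pulls back to a smooth $\rfl{f}=f\circ p$ on $\rfl{M}$, and the change of variables along the unfolded path gives the matching identity
\begin{equation}
\rt{}\rfl{f}(\rfl{\gamma})=\brt{}f(\gamma)
\end{equation}
exactly as in~\eqref{eq:rfl-int}. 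Thus a vanishing broken ray transform of $f$ forces the geodesic ray transform of $\rfl{f}$ to vanish on every geodesic of $\rfl{M}$ joining two boundary points.

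It then remains to show that the geodesic ray transform is injective on $\rfl{M}$, for which I would verify that $\rfl{M}$ is a simple surface. Strict convexity of $\partial\rfl{M}=E\cup E'$ comes from the strict convexity of $E$; the only subtle points are the images of the corner set $C$, where the orthogonal meeting of $E$ and $R$ together with the $\infty$-flatness of $R$ ensures that $E$ and its mirror copy join into a smooth, strictly convex boundary without a spurious corner. The hypothesis that any two points of $\bar{E}$ (not both in $C$) are joined by a unique broken ray of each parity, depending smoothly on the endpoints, says precisely that any two boundary points of $\rfl{M}$ are joined by a unique geodesic depending smoothly on them, so $\rfl{M}$ is nontrapping and has no conjugate points. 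The degenerate clause that the broken ray lies in $R$ when both endpoints are in $C$, together with the vanishing of the odd-order normal derivatives at $C$, is what makes the unfolded geodesics and the exponential map of $\rfl{M}$ smooth up to and across the image of $C$. Granting simplicity, injectivity of the geodesic ray transform on simple surfaces (e.g.~\cite{SU:surface}) yields $\rfl{f}=0$, hence $f=0$; this is in substance an application of Theorem~\ref{thm:rfl}.

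I expect the main obstacle to be exactly the analysis near the corner set $C$: establishing that the doubling is smooth there and that the broken-ray geometry does not degenerate, so that $\rfl{M}$ is honestly simple and the injectivity theory for the geodesic ray transform applies. The orthogonality, $\infty$-flatness, $\infty$-evenness and odd-derivative conditions are tailored to overcome precisely this difficulty, so the real work of the proof is to confirm that these hypotheses deliver a smooth simple reflected surface.
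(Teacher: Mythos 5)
Your proposal is correct and follows essentially the same route as the paper: reduce to the geodesic ray transform on the reflected surface via Theorem~\ref{thm:rfl}, verify that the doubled surface is simple (this is exactly Lemma~\ref{lma:rfl-reg}(10), where the corner analysis you anticipate is carried out), and conclude by injectivity of the geodesic ray transform on simple surfaces as in~\cite{SU:surface}. The parity correspondence and the smoothness discussion you supply are the content behind those two cited results, so no further comment is needed.
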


\begin{proof}
By Theorem~\ref{thm:rfl} it is enough to show that the geodesic ray transform $\rt{}$ is injective on the Riemannian manifold $(\rfl{A},\rfl{g})$ in the class~$C^\infty(\rfl{A})$.

The reflected manifold $\rfl{A}$ is simple by Lemma~\ref{lma:rfl-reg}(10), so the geodesic ray transform is injective by~\cite[Theorem~1.1]{SU:surface}.
\end{proof}

\begin{theorem}
\label{thm:brt3}
Let $(M,g)$ be a compact Riemannian manifold with boundary and suppose that $\dim M\geq3$.
Assume $\rho\in C^\infty(M,\R)$ satisfies the following:
\begin{enumerate}
\item $d\rho\neq0$ on $\rho^{-1}(0)$.
\item $\rho$ is $\infty$-even at $\partial M\cap\rho^{-1}([0,\infty))$.
\item If $T=\max_M\rho$, the set $\rho^{-1}(T)$ has zero measure.
\item The level set $\rho^{-1}(t)$ is strictly convex in $\rho^{-1}((t,\infty))$ for all $t\in(0,T)$.
\item $\partial M\cap\rho^{-1}([0,\infty))$ is $\infty$-flat.
\end{enumerate}
Denote $A=\rho^{-1}([0,\infty))$.

Then the broken ray transform is injective in the class $L^2(A)$ in~$A$, when the set of tomography is $E=\rho^{-1}(0)\subset\partial A$.
\end{theorem}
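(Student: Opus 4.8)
The plan is to mirror the proof of Theorem~\ref{thm:brt2}: reduce the broken ray transform on $A$ to the geodesic ray transform on the reflected manifold $\rfl{A}$ via Theorem~\ref{thm:rfl}, and then verify that $\rfl{A}$ satisfies the hypotheses of a known injectivity result for the geodesic ray transform. Since $\dim\rfl{A}=\dim A=\dim M\geq3$, the natural tool is the strictly convex foliation (layer stripping) result of Uhlmann and Vasy~\cite{UV:local-x-ray}, rather than the two-dimensional result used for surfaces. Thus the whole argument rests on producing a smooth strictly convex foliation of $\rfl{A}$ and checking the regularity needed to apply~\cite{UV:local-x-ray}.

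First I would record that $\rfl{A}$ is a genuine smooth compact Riemannian manifold with boundary $\rfl{E}=\rfl{\rho}^{-1}(0)$. Here $R=\partial M\cap\rho^{-1}([0,\infty))$ is the reflecting part of the boundary and $E=\rho^{-1}(0)$ is the set of tomography. Assumption~(5) (that $R$ is $\infty$-flat) together with the $\infty$-evenness in~(2) allow Lemma~\ref{lma:rfl-reg} to apply, so the doubled metric $\rfl{g}$ is smooth across $\bar R$. Assumption~(2) also lets $\rho$ extend to a smooth reflection-invariant function $\rfl{\rho}$ on $\rfl{A}$ whose level sets $\rfl{\rho}^{-1}(t)$ are exactly the doubles of $\rho^{-1}(t)$, while assumption~(1) guarantees that $\rfl{E}=\rfl{\rho}^{-1}(0)$ is a regular level set and hence a smooth hypersurface.

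The central step is to show that the family $\{\rfl{\rho}^{-1}(t)\}_{t\in(0,T)}$ is a strictly convex foliation of $\rfl{\rho}^{-1}((0,T))$. By assumption~(4) each $\rho^{-1}(t)$ is strictly convex toward $\rho>t$ inside the original piece $A$, so the only thing to verify is that this strict convexity survives the gluing across $R$. This is precisely where the $\infty$-even and $\infty$-flat hypotheses do their work: because $\rfl{\rho}$ and $\rfl{g}$ are smooth and even across $\bar R$, the second fundamental form of $\rfl{\rho}^{-1}(t)$ extends smoothly across $R$ and agrees there with the one-sided form, so no crease is created at the reflecting boundary and the definite sign is preserved. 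I expect this verification---that the foliation condition of~\cite{UV:local-x-ray} holds uniformly across $\bar R$, and not merely in the interior of each copy---to be the main obstacle, since it requires tracking how the geometry of the level sets matches at $\bar R$ rather than simply quoting strict convexity on each half.

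With the foliation in hand, the layer stripping theorem of~\cite{UV:local-x-ray} shows that any $f\in L^2(\rfl{A})$ with vanishing geodesic ray transform must vanish on the foliated region $\rfl{\rho}^{-1}((0,T))$. Assumption~(3), that $\rho^{-1}(T)$ and therefore $\rfl{\rho}^{-1}(T)$ has measure zero, then upgrades this to $f=0$ almost everywhere on $\rfl{A}$, which is enough in the $L^2$ class. Finally, invoking Theorem~\ref{thm:rfl} once more translates this injectivity on $\rfl{A}$ back into injectivity of the broken ray transform on $A$ with set of tomography $E=\rho^{-1}(0)$, completing the argument.
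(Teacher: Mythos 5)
Your proposal is correct and follows essentially the same route as the paper: reduce to the geodesic ray transform on $\rfl{A}$ via Theorem~\ref{thm:rfl}, then use Lemma~\ref{lma:rfl-reg} (the $\infty$-even/$\infty$-flat hypotheses) to check that $\rfl{A}$, $\rfl{g}$ and $\rfl{\rho}$ satisfy the strictly convex foliation hypotheses of the Uhlmann--Vasy result (Theorem~\ref{thm:UV}). The paper's proof is just a two-line citation of these ingredients, whereas you spell out the verification that the foliation condition survives the gluing across $\bar{R}$, which is indeed the substantive point.
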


To prove the theorem, we need the following result.

\begin{theorem}[Corollary in \cite{UV:local-x-ray}]
\label{thm:UV}
Let $(X,g)$ be a compact Riemannian manifold of dimension at least 3 with boundary embedded in a manifold $(\ext{X},g)$.
Assume there is a function $\rho\in C^\infty(\ext{X},\R)$ such that $X=\rho^{-1}([0,\infty))$, $\partial X=\rho^{-1}(0)$, $d\rho\neq0$ on~$\partial X$.
Let $T=\max_{\ext{X}}\rho$. Assume furthermore that $\rho^{-1}([t,\infty))$ is strictly convex for all $t\in[0,T)$ and $\rho^{-1}(T)$ has zero measure. 

Then the geodesic ray transform is injective in the class $L^2(X)$ on the manifold~$X$. 
\end{theorem}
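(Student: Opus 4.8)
The plan is to reduce global injectivity to a local inversion result near each strictly convex level set of $\rho$ and then propagate this local information across the foliation of $X$ by the hypersurfaces $\Sigma_t=\rho^{-1}(t)$ via a layer-stripping argument; this is the scheme of Uhlmann and Vasy. By assumption $d\rho\neq0$ on $\partial X=\rho^{-1}(0)$, so after shrinking $\ext X$ we may take $d\rho\neq0$ on a neighborhood of $\partial X$, and together with the strict convexity of each $\rho^{-1}([t,\infty))$ this makes $\{\Sigma_t\}_{t\in[0,T)}$ a smooth foliation of $X\setminus\rho^{-1}(T)$ in which $\Sigma_t$ is strictly convex as seen from the deep side $\{\rho>t\}$. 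Since $\rho^{-1}(T)$ is null, it suffices to prove that $f=0$ almost everywhere on $\rho^{-1}([0,t])$ for every $t<T$.

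The heart of the matter is the local inversion. Fix $t_0\in[0,T)$ and a point $p\in\Sigma_{t_0}$, and work in a thin lens-shaped region $\Omega_c=\{t_0-c<\rho\}$ near $p$ for small $c>0$. Following Uhlmann--Vasy, I would attach an artificial boundary along $\{\rho=t_0-c\}$, choose a boundary defining function $x$ there, and consider only geodesics that remain in $\Omega_c$ and are nearly tangent to $\Sigma_{t_0}$. The key object is the exponentially weighted, localized normal operator $N_\varphi=e^{\varphi/h}\,\chi\,\rt{}^{*}\chi\,\rt{}\,e^{-\varphi/h}$, with $\chi$ a microlocal cutoff, $h$ a semiclassical parameter, and $\varphi$ a strictly convex function of $x$. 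A stationary phase computation on the oscillatory integral defining $N_\varphi$ shows that it is an elliptic operator in Melrose's semiclassical scattering pseudodifferential calculus, the ellipticity at the scattering face being a direct consequence of the strict convexity of $\Sigma_{t_0}$. Ellipticity then yields, for $h$ small, the stability estimate $\aabs{f}_{L^2(\Omega_c)}\lesssim\aabs{\rt{} f}$ for $f$ supported in $\Omega_c$, and hence the local statement: if $\rt{} f$ vanishes on the geodesics used and $f$ is already known to vanish on $\{\rho<t_0\}$, then $f=0$ on a one-sided neighborhood of $\Sigma_{t_0}$ reaching into $\{\rho>t_0\}$. Because the convexity is uniform on compact subsets of $[0,T)$, the size of this neighborhood can be taken uniform in $t_0$.

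Finally I would run the layer-stripping argument. Let $f\in L^2(X)$ satisfy $\rt{} f=0$ and set
\[
S=\{s\in[0,T):f=0\text{ a.e.\ on }\rho^{-1}([0,s])\}.
\]
Local inversion near $\partial X$, using almost-tangent geodesics at the strictly convex boundary, shows that $S$ contains some $\delta>0$, so $S\neq\emptyset$; the local step pushes the recovered region strictly inward, so $S$ is open; and the uniformity of the local step in $t_0$ together with a limiting argument shows that $S$ is closed in $[0,T)$. Since $[0,T)$ is connected, $S=[0,T)$, and the measure-zero hypothesis on $\rho^{-1}(T)$ gives $f=0$ almost everywhere on $X$.

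The main obstacle is the microlocal core of the local step: constructing the artificial boundary and proving the scattering-calculus ellipticity of $N_\varphi$ uniformly in the semiclassical parameter and up to the scattering face. This is precisely where strict convexity, and the dimension hypothesis $\dim X\geq3$, are used in an essential way, and it requires the full stationary-phase analysis and symbol calculus in Melrose's scattering algebra rather than any soft argument.
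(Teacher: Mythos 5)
You should first note a mismatch of genre: the paper does not prove this statement at all. It is imported verbatim as a known result (hence the bracketed attribution ``Corollary in \cite{UV:local-x-ray}'') and is used as a black box in the proof of Theorem~\ref{thm:brt3}; the paper's entire ``proof'' is the citation. So there is no in-paper argument to compare against, and the honest way to settle this statement within the paper is simply to cite \cite{UV:local-x-ray}. That said, your sketch does faithfully reconstruct the architecture of the original Uhlmann--Vasy proof: the foliation by the strictly convex level sets $\Sigma_t=\rho^{-1}(t)$, the local inversion near a strictly convex hypersurface via an artificial boundary and an exponentially conjugated, cutoff normal operator analyzed in Melrose's scattering calculus (with $\dim X\geq3$ entering precisely through ellipticity of that operator), the open--closed layer-stripping argument on $[0,T)$, and the measure-zero set $\rho^{-1}(T)$ disposed of at the end. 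This is indeed how the global corollary is derived from the local theorem in \cite{UV:local-x-ray}.

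As a proof, however, your proposal has a genuine gap, and you half-admit it: the entire weight of the theorem rests on the step you only assert, namely that $N_\varphi$ is elliptic in the scattering calculus uniformly up to the front face and that ellipticity yields the estimate $\aabs{f}_{L^2}\lesssim\aabs{\rt{}f}$ on the lens region. Saying ``a stationary phase computation shows'' does not discharge this; it is a long symbol computation whose outcome (and its failure in dimension two) is exactly the content of \cite{UV:local-x-ray}, so what you have is a roadmap rather than a proof. Two smaller inaccuracies: in \cite{UV:local-x-ray} the conjugating weight is $e^{F/x}$ with $x$ the defining function of the artificial boundary, and the small parameter is the thickness of the lens region (equivalently the size of the constant in the weight), not an independent semiclassical parameter $h$ --- the genuinely semiclassical reformulation belongs to later work; and the closedness of your set $S$ needs a uniform lower bound on the depth gained at each step, which your appeal to uniform convexity on compact subsets of $[0,T)$ gestures at but does not establish, since the local neighborhood furnished by the local theorem degenerates as the convexity does. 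None of this affects the paper under review, where Theorem~\ref{thm:UV} is correctly treated as an external result.
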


\begin{proof}[Proof of Theorem~\ref{thm:brt3}]
By Theorem~\ref{thm:rfl} it is enough to show that geodesic ray transform is injective on $\rfl{A}$ for the class $\rfl{L^2(A)}$.
By Lemma~\ref{lma:rfl-reg} the manifold $\rfl{A}$ satisfies the assumptions of Theorem~\ref{thm:UV} and also $L^2(\rfl{A})\subset\rfl{L^2(A)}$, which confirms the claim.
\end{proof}

\begin{remark}
\label{rmk:stab3}
A stability estimate for Theorem~\ref{thm:UV} is given in~\cite{UV:local-x-ray}.
That estimate immediately yields a stability estimate for Theorem~\ref{thm:brt3}.
\end{remark}

\section{Reflected manifolds}
\label{sec:rfl}

The key idea in the proof of Proposition~\ref{prop:ex-cone} was to glue together copies of the original conical domain~$\Omega$.
The most simple case was when~$\Omega$ was part of a half space and the reflecting part of $\partial\Omega$ lay on the boundary of the half space.
In this case two copies of the original domain~$\Omega$ could be glued together to form~$\rfl{\Omega}$.

Similar reflecting and gluing can be done for Riemannian manifolds.
We focus here on the case analogous with the Euclidean half space.
The construction for more complicated Euclidean domains presented above can be generalized in the same fashion, but for the sake of simplicity we omit them here.
The analogue of Proposition~\ref{prop:ex-cone} for Riemannian manifolds can be used to show injectivity of the broken ray transform on some Riemannian manifolds.

\subsection{Construction of reflected manifolds}
\label{sec:rfl-constr}

\begin{figure}%
\includegraphics[width=\columnwidth]{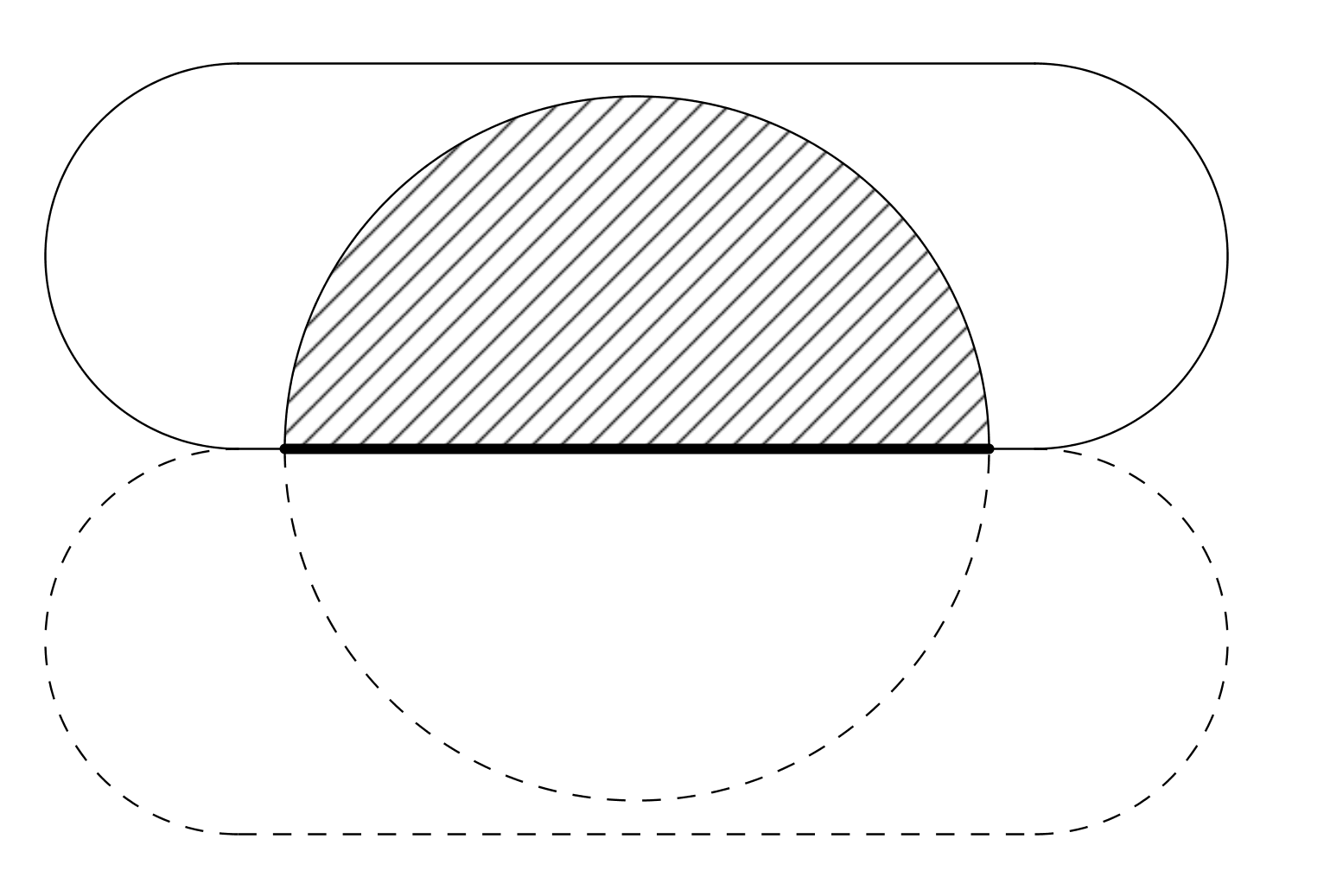}
\caption{An example of the construction in Section~\ref{sec:rfl-constr}. Here the underlying manifold~$M$ is the solid stadium shaped domain and $A$ is the shaded half disk within. The set~$R$ is the thick line, $E$ is the upper half circle and~$C$ constists of the two corner points in~$A$. The manifold $A$ is reflected into a doubled manifold~$\rfl{A}$, which here is the full disk. The resulting manifold~$\rfl{A}$ is a manifold with smooth boundary.}%
\label{fig:rfl-constr}%
\end{figure}

In the following $(M,g)$ is a smooth compact Riemannian manifold with boundary and $A=\overline{\sisus A}\subset M$ is a closed subset with $A\cap\partial M\neq\emptyset$ and the $C^1$ boundary of $A$ meets $\partial M$ orthogonally.
We reflect the set~$A$ with respect to $\partial M\cap A$.
The manifold~$\rfl{A}$ constructed below is this reflection of~$A$.
The construction is illustrated if Figure~\ref{fig:rfl-constr}.

Let $A_1=A$, $C_1=\partial M\cap\partial A$, $R_1=\partial M\cap\sisus A$, and $E_1=\sisus M\cap\partial A$.
The ``boundary'' (it is not the boundary in the topology of $M$) of $A_1$ is the (disjoint) union $\partial A_1=R_1\cup C_1\cup E_1$.
Now $A_1$ is a topological manifold with boundary with and the boundary is~$\partial A_1$.
Furthermore, $A_1$ is a smooth manifold with corners with
interior $\sisus A_1=A_1\setminus\partial A_1$,
smooth part of the boundary $R_1\cup E_1$,
and nonsmooth part of the boundary $C_1$.
The sets $\sisus A_1=A_1\setminus\partial A_1$, $R_1\cup E_1$ and~$C_1$ are the strata of~$A_1$ of depths 0, 1 and 2, respectively, in the sense of~\cite{J:man-corner}.

By $\partial A$ we always mean the boundary of $A$ in the topology of~$M$.
Thus, if we identify~$A$ with~$A_1$, we have $\partial A\subsetneq\partial A_1$.

The higher depth strata of~$A_1$ are empty (there can only be a corner in one direction), but in the example of Proposition~\ref{prop:pbrt-cube} strata of all possible depths appear.
We bound the depth of strata by 2 for technical simplicity.
As manifolds with smooth boundary are more convenient to work with, we will reduce the depth bound to 1 in Section~\ref{sec:spt-corner}.

We define $A_2$ as an identical copy of $A_1$ with all labels changed, and glue $A_1$ and~$A_2$ together along $R_1$ and~$R_2$ to form a manifold~$\rfl{A}$ with (smooth) boundary.

Let $\eta:A_1\to A_2$ be the natural bijection.
We define the relation $\sim$ on the disjoint union $A_1\cup A_2$ by letting $x\sim y$ if $x=y$, or $x\in \bar{R}_1$ and $y=\eta(x)$, or $y\in \bar{R}_1$ and $x=\eta(y)$.
This is obviously an equivalence relation, and the quotient space $\rfl{A}=(A_1\cup A_2)/\sim$ is well defined.
We denote by $\rfl{R}$ the image of $R_1$ (or $R_2$; the image is the same) under the quotient map.
We consider $A_i\setminus \bar{R}_i$, $i=1,2$, to be a subspace of~$\rfl{A}$ in the natural way.
We define $\iota_i:A_i\to\rfl{A}$ to be the natural injection.

It is geometrically rather obvious that~$\rfl{A}$ is an $n$-dimensional topological manifold with boundary.
The boundary of $\rfl{A}$ is a disjoint union of $\iota_1(E_1)$, $\iota_2(E_2)$, and $\rfl{C}=\iota_1(C_1)=\iota_2(C_2)$, and the interior is a disjoint union of $\iota_1(\sisus A_1)$, $\iota_2(\sisus A_2)$, and $\rfl{R}=\iota_1(R_1)=\iota_2(R_2)$.
Using Riemannian boundary normal coordinates at $R_i$ we also turn it into a smooth manifold with boundary in a natural way.
In $C_i$ we may proceed similarly, but the model space is $\R^{n-2}\times[0,\infty)^2$ which after reflection and gluing becomes $\R^{n-1}\times[0,\infty)$; we have $\rfl{C}\subset\partial\rfl{A}$ as expected.

Due to the use of Riemannian boundary normal coordinates the transition maps are smooth at~$\rfl{R}$, and~$\rfl{A}$ is indeed a smooth manifold with boundary.

The natural projection map $\pi_1:\rfl{A}\to A_1$ defined by $\pi(\iota_1(x))=x$ and $\pi(\iota_2(x))=\eta^{-1}(x)$ for each $x\in A_1$ is a covering map.
Identifying~$A$ with~$A_1$ and~$A_2$ and writing $\pi:\rfl{A}\to A$ as the projection, we have the obvious property
\begin{equation}
\label{eq:proj-inj}
\pi\circ\iota_i=\id:A\to A\text{ when }i=1,2.
\end{equation}
The projection $\pi$ can be used to pull back (scalar, vector, and tensor) functions from~$A$ to the reflected manifold~$\rfl{A}$; we define for any $f:A\to\C$ the reflected version $\rfl{f}=\pi^*f$ and similarly for higher rank tensors.

Special care must be taken since~$\pi$ is not smooth but only continuous on~$\rfl{R}$. This is related to the fact that not all choices of boundary charts at $R_1$ give $\rfl{A}$ a smooth structure. Some additional conditions thus need to be satisfied to guarantee that~$\rfl{f}$ is smooth if~$f$ is. This issue is considered in Section~\ref{sec:rfl-reg} in more detail.


The above construction does not use the smoothness of the manifold~$M$ and its metric~$g$.
If we instead equip~$M$ with $C^k$ differentiable structure and take $g\in C^{k,\alpha}$ with $k+\alpha\geq2$, then $\rfl{A}$ is naturally equipped with a~$C^k$ structure.
The condition $k+\alpha\geq2$ ensures that geodesics on~$M$ do not branch, and the boundary normal coordinates actually provide coordinates.

We remark that the set~$A$ is not a manifold with boundary, since it is not locally diffeomorphic to $\R^{n-1}\times[0,\infty)$ at points in $\partial A\cap\partial M$.
At these points the proper model space is $\R^{n-2}\times[0,\infty)^2$, which makes $A$ a manifold with corners in the sense defined in~\cite{J:man-corner}.
Theorem~\ref{thm:rfl} for manifolds with corners will be generalized to manifolds without corners in Section~\ref{sec:spt-corner}.

We wish to point out that both precomposition (for functions on a manifold) and inverse of postcomposition (for curves) of an object with the projection~$\pi$ are denoted by a tilde.
An object with tilde should therefore be understood as the natural corresponding object (or one of them in the case of curves) on the reflected manifold~$\rfl{A}$.

\subsection{Regularity of the reflected manifold}
\label{sec:rfl-reg}

We keep the assumptions made in the beginning of Section~\ref{sec:rfl-constr} regarding~$M$, $g$ and~$A$ but the smoothness requirement is only that $g\in C^{k,\alpha}$ with $k+\alpha\geq2$. Lemma~\ref{lma:rfl-reg} below demonstrates the correspondence between the properties of~$A$ and its reflected version~$\rfl{A}$.

\begin{definition}
\label{def:even-flat}
Let $B\subset\partial M$ and $k\leq m$.
A function $f\in C^m(M)$ is $k$-even at~$B$ if $\partial_\nu^if|_B=0$ for all odd $i\leq k$.

A rank two tensor $f_{ij}$ (written in boundary normal coordinates near the boundary) of class $C^m$ is $k$-even at~$B$ if the functions $f_{11}$ and $f_{ij}$ for $i,j>1$ are $k$-even at~$B$ in the above sense and $\partial_\nu^if_{1i}|_B=\partial_\nu^if_{i1}|_B=0$ for all even~$i\leq k$.
In particular, $B\subset\partial M$ is called $k$-flat if the metric~$g$ is $k$-even at~$B$.
\end{definition}

The above definition can be easily extended to tensors of any rank, but we do not need to consider ranks other than zero and two here.
The definitions are given so that a~$C^k$ tensor field~$f$ on $A$ is $k$-even at~$B$ if and only if $\rfl{f}=\pi^*f$ is a~$C^k$ tensor field on~$\rfl{A}$.
This correspondence is the basis of Lemma~\ref{lma:rfl-reg} below.

\begin{definition}
Let $A\subset M$ be an closed subset of~$M$ as in Section~\ref{sec:rfl-constr}.
A set~$B\subset\partial A$ (boundary in the topology of~$M$) is evenly (resp. oddly) strictly broken ray convex in~$A$, if for any two points on~$B$ there is a broken ray with an even (resp. odd) number of reflections (possibly zero) connecting the two points such that the interior of the broken ray is in~$A$.
\end{definition}

\begin{lemma}
\label{lma:rfl-reg}
Regularity of functions and metrics on the original manifold~$M$ and the reflecting manifold~$\rfl{M}$ correspond in the following way (here $k,m\in\{0,1,\dots,\infty\}$):
\begin{enumerate}
\item $R$ is 1-flat if and only if the second fundamental form vanishes on~$R$.
\item If $g\in C^k$ and $R$ is $m$-flat, then $\rfl{g}\in C^{\min(k,m)}$.
\item If $f\in C^k$ and $f$ is $m$-even at $R$, then $\rfl{f}\in C^{\min(k,m)}$.
\item If $f\in L^p$, then $\rfl{f}\in L^p$.
\item If $A$ is (strictly) evenly and oddly broken ray convex, then~$\rfl{A}$ is geodesically convex.
\item If $A$ is strictly convex , then~$\rfl{A}$ is strictly convex.
\item If $A$ is convex, then~$\rfl{A}$ is convex.
\item Suppose $g$ and~$R$ are~$C^3$. If~$R$ is strictly convex, there are no geodesics tangent to~$\rfl{R}$.
\item If $R$ is strictly concave, geodesics tangent to~$\rfl{R}$ branch.
\item $\rfl{M}$ is simple if the assumptions listed in Theorem~\ref{thm:brt2} hold (with the word `surface' replaced by `manifold').
\end{enumerate}
In the parts~(8--9) a geodesic means a locally length minimizing curve, since the geodesic equation does not make sense on~$\rfl{R}$ when~$R$ is not 1-flat.
\end{lemma}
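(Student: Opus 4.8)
The plan is to reduce every assertion to a local statement near the gluing hypersurface~$\rfl R$, since away from~$\bar R$ the projection~$\pi$ is a local isometry and all the claimed correspondences are immediate. I would fix a point of~$R$ and work in Riemannian boundary normal coordinates $(x^1,x')$ for~$M$ with $R=\{x^1=0\}$ and $x^1\geq0$ on the relevant copy of~$A$; in these coordinates the reflection defining~$\rfl A$ is $x^1\mapsto-x^1$ and the metric has the normal form $\rfl g=(\der x^1)^2+g_{ab}(x^1,x')\der x^a\der x^b$ with $a,b\geq2$. The single analytic tool behind parts~(1--3) is the elementary even-extension calculus: a~$C^m$ function $h$ on $\{x^1\geq0\}$ extends to a~$C^m$ function of $x^1\in\R$ by $h(\abs{x^1},\cdot)$ exactly when $\partial_{x^1}^i h|_{x^1=0}=0$ for every odd $i\leq m$, and it extends to an \emph{odd} $C^m$ function exactly when the even-order derivatives up to order~$m$ vanish.

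I would then dispatch the regularity statements. For part~(1), in these coordinates $g_{11}\equiv1$ and $g_{1a}\equiv0$ hold automatically, $\sff{a}{b}=-\tfrac12\partial_{x^1}g_{ab}$, so the sole content of $1$-flatness (Definition~\ref{def:even-flat} with $k=1$) is $\partial_{x^1}g_{ab}|_R=0$, i.e. the vanishing of the second fundamental form. For part~(2) the reflection acts on each component with the parity set by its number of normal indices, and $m$-flatness is precisely the vanishing up to order~$m$ of the wrong-parity normal derivatives in each slot; the extension calculus then makes~$\rfl g$ of class~$C^{\min(k,m)}$, the cap~$k$ coming from the regularity already present away from~$\rfl R$. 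Part~(3) is the same argument with a single even slot. Part~(4) is immediate: each sheet~$\iota_i$ maps isometrically onto~$A$, so~$\pi$ is measure preserving off the null set~$\rfl R$ and $\aabs{\rfl f}_{L^p(\rfl A)}^p=2\aabs{f}_{L^p(A)}^p$.

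For the convexity statements~(5--7) I would first record the unfolding correspondence: a unit-speed curve~$\gamma$ in~$\rfl A$ is a geodesic (length minimizer) if and only if $\pi\circ\gamma$ is a broken ray in~$A$, the transversal crossings of~$\rfl R$ corresponding bijectively to reflections at~$R$ and the reflection law being forced by the isometry interchanging the two sheets. Since each crossing of~$\rfl R$ switches sheets, a geodesic joining two boundary points on the same copy of~$E$ projects to a broken ray with an \emph{even} number of reflections, and one joining points on opposite copies projects to an \emph{odd} one. Thus even-and-odd broken ray convexity of~$A$ supplies a connecting geodesic for every pair of boundary points, giving geodesic convexity~(5); strict and nonstrict convexity of the boundary hypersurface are second-fundamental-form conditions preserved verbatim under the isometric reflection on each copy, so they pass to~$\rfl A$ once one checks that the orthogonal meeting of~$E$ and~$R$ at~$C$ produces no concave corner at~$\rfl C$, yielding~(6--7).

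The delicate points, and the main obstacle, are parts~(8--10), where~$\rfl g$ need not be~$C^1$ across~$\rfl R$ and one must argue with length minimizers rather than with the geodesic equation. Here I would use the quantitative convexity afforded by the~$C^3$ hypothesis: if~$R$ is strictly convex, a curve tangent to~$\rfl R$ runs where the tangential metric is maximal and hence can be strictly shortened by displacing it into one sheet, so no minimizer is tangent to~$\rfl R$, which is~(8); if~$R$ is strictly concave the two displacements into the separate sheets are both locally minimizing, which is exactly the branching in~(9). Finally~(10) is assembled from the previous parts under the manifold version of the hypotheses of Theorem~\ref{thm:brt2}: $\infty$-flatness of~$R$ makes~$\rfl g$ smooth by~(2) and~$\rfl R$ totally geodesic by~(1); strict convexity of~$E$ makes $\partial\rfl M=\rfl E$ strictly convex by~(6); and the existence, uniqueness and smooth dependence of broken rays of each parity translate through the unfolding of~(5) into a unique, smoothly varying geodesic between any two boundary points with no conjugate points, i.e. simplicity. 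The genuinely technical step is regularity and strict convexity \emph{through the corner}~$\rfl C$: one needs corner normal coordinates in which $\R^{n-2}\times[0,\infty)^2$ becomes $\R^{n-1}\times[0,\infty)$ after reflection, and the hypotheses that the boundary defining functions be~$\infty$-even at~$R$ and that the odd-order normal derivatives of the connecting geodesic vanish at~$C$ are precisely what make the doubled boundary smooth and strictly convex across~$\rfl C$.
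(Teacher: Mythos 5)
Your treatment of parts (1)--(7) and (10) follows essentially the same route as the paper: the boundary normal coordinate computation of $\sff{a}{b}$ for (1) (your sign differs only by the orientation convention for $\nu$), the even-extension calculus for (2)--(4), and the unfolding correspondence between geodesics of $\rfl{A}$ and broken rays of $A$ with parity tracked by sheet-switching for (5)--(7) and (10); the paper simply declares (2)--(7) obvious, so your added detail is harmless and consistent.

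The genuine gap is in part (8). Your argument --- ``a curve tangent to $\rfl{R}$ runs where the tangential metric is maximal and hence can be strictly shortened by displacing it into one sheet'' --- addresses a curve that \emph{lies in or near} $\rfl{R}$, but the statement concerns a locally minimizing curve that is merely \emph{tangent} to $\rfl{R}$ at a single point; away from that point the curve may already sit well inside one sheet, where no displacement shortens it, so the variational sketch does not rule it out. The paper instead runs a case analysis on how $\gamma$ meets $\rfl{R}$ near the tangency: (a) it touches once and stays in one sheet, which is impossible because a geodesic of $M$ tangent to the strictly convex piece $R$ exits $M$; (b) it touches once and changes sheets, which is reduced to (a) by reflecting the $t>0$ half with the sheet-swapping isometry; (c) it lies in $\rfl{R}$, which is impossible for a curve in a strictly convex boundary hypersurface. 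Crucially, before this trichotomy even makes sense one must exclude the possibility that the intersection points of $\gamma$ with $\rfl{R}$ accumulate at $\gamma(0)$ without $\gamma$ lying in $\rfl{R}$; this non-accumulation of reflection points near a strictly convex reflector is a nontrivial fact, is exactly where the $C^3$ hypothesis enters, and the paper outsources it to a separate reference. Your proposal neither performs the case analysis nor addresses accumulation, and it invokes $C^3$ without attaching it to any step. Relatedly, in (9) the clean statement is the paper's reflection construction of a second locally minimizing continuation (with a separate argument for the sub-case where the intersection points do accumulate or $\gamma$ stays in $\rfl{R}$); your ``two displacements are both locally minimizing'' is the right idea for the transversal sub-case but again omits the degenerate one.
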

\begin{proof}
(1) Use boundary normal coordinates with the first coordinate as the normal direction to the boundary.
In these coordinates $g_{11}=1$ is constant and $g_{1i}=0$ when~$i>0$.
Thus it suffices to study $\partial_\nu g_{ij}$ for all~$i,j>1$.

In these coordinates $\nu=(-1,0,\dots,0)$ is the outward unit normal vector.
Since~$\nu$ is constant in these coordinates, for two vectors~$a$ and~$b$ tangent to the boundary at a boundary point we have for the second fundamental form
\begin{equation}
\begin{split}
\sff{a}{b}
&=
-\ip{\nabla_a\nu}{b}
\\&=
-(a^j\partial_j\nu^i+a^j\nu^k\Gamma^i_{\phantom{i}jk})b_i
\\&=
\Gamma^i_{\phantom{i}j1}a^jb_i
\\&=
\frac{1}{2}(\partial_1g_{ij}+\partial_jg_{i1}-\partial_ig_{j1})a^jb^i
\\&=
\frac{1}{2}\partial_1g_{ij}a^jb^i.
\end{split}
\end{equation}
Thus $\sff{\cdot}{\cdot}$ vanishes on $R$ if and only if $\partial_\nu g_{ij}=0$ for all~$i,j>1$.

(2--7) Obvious.

%

(8) Let $\rho:\rfl{A}\to\rfl{A}$ be the map which reflects~$\rfl{A}$ with respect to~$\rfl{R}$ in the natural way.

Suppose $\gamma:(-\delta,\delta)\to\rfl{A}$ is a geodesic which meets~$\rfl{R}$ tangentially at~$\gamma(0)$.
The intersection points of~$\gamma$ and~$\rfl{R}$ cannot accumulate at~$\gamma(0)$ unless $\gamma((-\delta,\delta))\subset\rfl{R}$;
the points of reflection of a broken ray of finite length near a strictly convex part of the boundary cannot accumulate.
The proof of this statement is too long to be included here; see~\cite{I:bdy-det} for proof and explanation of the~$C^3$ assumption.
%
%

There are three options left:
(a)~$\gamma$ intersects $\rfl{R}$ only at~$\gamma(0)$ and stays on one side of~$\rfl{R}$ (say,~$\iota(A_1)$),
(b)~$\gamma$ intersects $\rfl{R}$ only at~$\gamma(0)$ and changes side there (we may choose $\gamma(t)\in\iota(A_1)$ for~$t\leq0$ and $\gamma(t)\in\iota(A_2)$ for $t\geq0$), or
(c)~$\gamma$ lies in~$\rfl{R}$.

In case~(a) $\gamma$ cannot be a geodesic because of strict convexity of~$R_1$.
In case~(b) define a curve $\phi$ as $\phi(t)=\gamma(t)$ for $t\leq0$ and $\phi(t)=\rho(\gamma(t))$ for~$t>0$.
By construction of the reflected manifold, the curve $\phi$ is also a geodesic.
But now~$\phi$ falls in the case~(a), which is impossible.
Also case~(c) is impossible, since a curve lying at a strictly convex subset~$R$ of the boundary $\partial M$ cannot be a geodesic.

We conclude that a geodesic tangential to~$\rfl{R}$ at a point where~$R_1$ is strictly convex cannot exist.

(9) Suppose $\gamma:(-\delta,\delta)\to\rfl{A}$ is a geodesic which meets~$\rfl{R}$ tangentially at~$\gamma(0)$.
Consider the case when $\gamma(t)\notin\rfl{R}$ for~$t>0$.
Now construct another geodesic $\phi:(-\delta,\delta)\to\rfl{A}$ by letting $\phi(t)=\gamma(t)$ for $t\leq0$ and $\phi(t)=\rho(\gamma(t))$ for~$t>0$.
By the construction of the reflected manifold~$\rfl{A}$ also~$\phi$ is a geodesic.
Thus $\gamma$ branches at~$t=0$.

Then consider the case when $\gamma(t)\in\rfl{R}$ for $t\geq0$ or the points where $\gamma|_{[0,\delta)}$ intersects~$\rfl{R}$ accumulate at $\gamma(0)$.
Now define $\phi$ as $\gamma$ for $t\leq0$ as above and let~$\phi$ for~$t>0$ be the unique geodesic in $\sisus\iota(A_1)$ with initial direction $\dot{\gamma}(0)$.
This geodesic exists if~$\delta$ is small enough.
Again, the curve $\phi$ is a geodesic and~$\gamma$ branches.

We conclude that any geodesic tangent to~$\rfl{R}$ at a strictly concave point always has nonunique continuation.

(10)
The assumptions imply that~$\rfl{M}$ is smooth and has smooth and strictly convex boundary.
Also for any two boundary points there is a unique geodesic joining them and the geodesic depends smoothly on its endpoints.
Thus~$\rfl{M}$ is simple by definition.
\end{proof}

\begin{remark}
To have unique geodesics on~$\rfl{M}$, we want~$\rfl{g}$ to be $C^{1,1}$~(or $C^2$).
By the above lemma, for this we need that the original metric~$g$ is $C^{1,1}$ or $C^2$ and the reflector~$R$ is flat in the sense that the second fundamental form vanishes.
For higher regularity of~$\rfl{g}$ we need higher order flatness of the reflector.
\end{remark}

For shorthand, we give the following definition so that the various cases of Lemma~\ref{lma:rfl-reg} need not be listed again when it is used.

\begin{definition}
If~$F$ is class of functions from~$A$ to~$\R$ (e.g.~$F=C^k(A,\R)$ or $F=L^p(A,\R)$), we define the reflected class of functions by
\begin{equation}
\rfl{F}=\{\rfl{f}:f\in F\}.
\end{equation}
\end{definition}

\subsection{From broken ray transform to geodesic ray transform}

The main result we present is Theorem~\ref{thm:rfl}.
It is a direct generalization of the ideas behind the proofs in Proposition~\ref{prop:ex-cone}, but we state it as a theorem to highlight the generality of the reflection construction.
We gave two applications of this theorem in Section~\ref{sec:appl} to show injectivity of the broken ray transform on a fairly large class of manifolds.
Simpler and more concrete examples are given in Section~\ref{sec:ex}.
The geodesic ray transform and the broken ray transform were defined in definition~\ref{def:rt-brt}.


We remind the reader that the set~$A$ is a manifold with corners.
The case of manifolds with smooth boundary requires more work and will be discussed in Section~\ref{sec:spt-corner}.

\begin{theorem}
\label{thm:rfl}
Let $A$ be as in Section~\ref{sec:rfl-constr}.
Let $F,H:A\to\R$ be some classes of functions on~$A$ and let $E=\partial A\setminus\partial M$ be the set of tomography. Then:
\begin{enumerate}
\item If $\rt{\rfl{h}}\rfl{f}$ determines both $\rfl{h}\in\rfl{H}$ and $\rfl{f}\in\rfl{F}$, then $\brt{h}f$ determines both $h\in H$ and $f\in F$.
\item If $\rt{\rfl{h}}\rfl{f}$ determines $\rfl{f}\in\rfl{F}$ for a fixed (known) $\rfl{h}\in\rfl{H}$, then $\brt{h}f$ $f\in F$ for a fixed (known)~$h\in H$.
\end{enumerate}
\end{theorem}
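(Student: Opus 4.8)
The plan is to exploit the correspondence between broken rays in $A$ and geodesics in $\rfl{A}$ established by the construction in Section~\ref{sec:rfl-constr}, mirroring the argument used in Proposition~\ref{prop:ex-cone}(1). The core identity I would establish first is the analogue of~\eqref{eq:rfl-int} in the attenuated setting. Concretely, given a broken ray $\gamma$ in $A$ with endpoints on $E$, I claim there is a geodesic $\rfl{\gamma}$ in $\rfl{A}$ (in fact two, one starting in each copy $\iota_i(A)$) joining boundary points, such that $\pi\circ\rfl{\gamma}=\gamma$ after identifying reflected segments. This is exactly the statement that unfolding a broken ray across $\rfl{R}$ yields a geodesic: a geodesic segment in $\sisus\iota_1(A)$ that hits $\rfl{R}$ transversally continues as a geodesic into $\iota_2(A)$ (by the smoothness of $\rfl{A}$ at $\rfl{R}$), and projecting back down via $\pi$ reflects it according to the usual reflection law. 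Since $\pi$ is a Riemannian local isometry away from $\rfl{R}$, the speed is preserved and $\rfl{\gamma}$ has the same length $L$.

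Next I would verify that the attenuated transforms match under this correspondence. Using $\rfl{f}=\pi^*f$ and $\rfl{h}=\pi^*h$ together with $\pi\circ\iota_i=\id$ from~\eqref{eq:proj-inj}, both the integrand $f(\gamma(t))$ and the accumulated attenuation factor $\exp\!\left(\int_0^t h(\gamma(s))\der s\right)$ pull back verbatim along $\rfl{\gamma}$, because at each parameter value $\rfl{f}(\rfl{\gamma}(t))=f(\pi(\rfl{\gamma}(t)))=f(\gamma(t))$ and similarly for $\rfl{h}$. Hence
\begin{equation}
\rt{\rfl{h}}\rfl{f}(\rfl{\gamma})=\brt{h}f(\gamma)
\end{equation}
for every broken ray $\gamma$. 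This shows that knowing $\brt{h}f$ on all broken rays determines $\rt{\rfl{h}}\rfl{f}$ on all those geodesics $\rfl{\gamma}$ that project to broken rays. The remaining point is that \emph{every} geodesic of $\rfl{A}$ joining two boundary points arises this way: projecting any such geodesic by $\pi$ yields a broken ray in $A$ with endpoints on $E$ (the boundary $\partial\rfl{A}$ consists of $\iota_1(E)$, $\iota_2(E)$ and $\rfl{C}$, all of which project into $\bar E$). So the geodesic ray transform of $\rfl{f}$ is \emph{completely} recovered from the broken ray transform of $f$.

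With this established, both parts follow formally. For part~(2): if $\rt{\rfl{h}}\rfl{f}$ determines $\rfl{f}\in\rfl{F}$ for the fixed known $\rfl{h}$, then since $\brt{h}f$ determines the full data $\rt{\rfl{h}}\rfl{f}$, it determines $\rfl{f}$, and because $\rfl{f}=\pi^*f$ with $f=\rfl{f}\circ\iota_1$ recoverable pointwise via~\eqref{eq:proj-inj}, $\brt{h}f$ determines $f\in F$. Part~(1) is identical, recovering the pair $(\rfl{h},\rfl{f})$ and then projecting both back down to $(h,f)$. The main obstacle I anticipate is the first step: making precise the geodesic--broken-ray correspondence at the gluing locus $\rfl{R}$, where $\pi$ is only continuous and not smooth. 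One must check that a geodesic of $\rfl{A}$ crossing $\rfl{R}$ projects to a genuine reflected broken ray (angle of incidence equals angle of reflection), and handle geodesics that are tangent to $\rfl{R}$ or that run along $\rfl{R}$; here I would invoke Lemma~\ref{lma:rfl-reg}(8) to rule out tangential geodesics when $R$ is strictly convex, and note that the measure-zero set of geodesics meeting $\rfl{C}$ or lying in $\rfl{R}$ does not affect the transform.
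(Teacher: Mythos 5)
Your proposal is correct and takes essentially the same route as the paper: the key identity $\rt{\rfl{h}}\rfl{f}(\phi)=\brt{h}f(\pi\circ\phi)$ for geodesics $\phi\in\Gamma(\rfl{A})$, followed by descending from $(\rfl{h},\rfl{f})$ to $(h,f)$ via $\pi\circ\iota_i=\id$. The only cosmetic differences are that the paper proves part~(1) and deduces~(2) by taking $H=\{h\}$, and that your closing remarks on tangential geodesics at $\rfl{R}$ are more careful than the paper's one-line assertion that $\pi\circ\phi\in\Gamma_E(A)$.
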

\begin{proof}
We only prove part~(1); part~(2) results by letting $H=\{h\}$.
Suppose $\rt{\rfl{h}}\rfl{f}$ indeed determines both $\rfl{h}\in\rfl{H}$ and $\rfl{f}\in\rfl{F}$, and that $\brt{h}f$ is given.

Let $f\in F$ and $h\in H$. Construct~$\rfl{A}$ as in Section~\ref{sec:rfl-constr}.
Take any geodesic $\phi\in\Gamma(\rfl{A})$.
If we let $\gamma=\pi\circ\phi$, we have $\gamma\in\Gamma_E(A)$.
By definition of $\gamma$ and the reflected functions we have $\rfl{f}\circ\phi=f\circ\gamma$ and $\rfl{h}\circ\phi=h\circ\gamma$, so (cf.~\eqref{eq:rfl-int})
\begin{equation}
\rt{\rfl{h}}\rfl{f}(\phi)=\brt{h}f(\gamma).
\end{equation}
Thus, from the given function $\brt{h}f$ we obtain $\rt{\rfl{h}}\rfl{f}$. This determines~$\rfl{h}$ and~$\rfl{f}$ by assumption, and by Eq.~\eqref{eq:proj-inj} this determines~$h$ and~$f$.
\end{proof}

\begin{remark}
Any stability result for the geodesic ray transform on~$\rfl{A}$ immediately yields a stability result for the broken ray transform on~$A$.
Since stability is inherited in such a way, we do not discuss the stability of the broken ray transform on different manifolds.
\end{remark}

\begin{remark}
The above theorem only considers scalar functions $f:M\to\R$.
We can similarly define the broken ray transform for a tensor field of any order just like one defines the geodesic ray transform for a tensor field.
The theorem holds true for tensor fields as well, and the proof is the same; a tensor field~$f$ is reflected to $\rfl{f}=\pi^*f$ instead of simply reflecting all the component functions.
(A tensor function can only be recovered up to the natural gauge freedom; see e.g.~\cite{PSU:tensor-surface}.)
The theorem also remains true if one introduces a weight in the broken ray transform.
Replacing real numbers with complex numbers is also a trivial generalization.
\end{remark}

\begin{remark}
\label{rmk:ex-att}
The examples in Proposition~\ref{prop:ex-cone} were concerned with zero attenuation.
The attenuated broken ray transform is injective provided the corresponding attenuated ray transform in the plane is injective.
The analogue of Helgason's support theorem holds true with constant attenuation~\cite[Theorem~4.2]{K:exp-radon}.
For more results on attenuated ray transforms in Euclidean spaces, we refer to~\cite{N:attenuated-x-ray,BS:attenuated-x-ray,SF:attenuated-x-ray,F:attenuated-x-ray}.
Attenuated transforms have also been considered on manifolds (see e.g.~\cite{S:attenuated-x-ray,SU:attenuated-x-ray}), but we set our focus on the nonattenuated setting.
\end{remark}

\section{Support theorems and manifolds without corners}
\label{sec:spt-corner}

Theorem~\ref{thm:rfl} above was stated for a manifold~$A$ with corners.
It is appealing to consider the broken ray transform on a manifold~$M$ with smooth boundary (that is, without corners).
To achieve this we take the following two steps:
First, using geodesics (broken rays without reflections) with endpoints in~$E$ one can in favorable situations recover the unknown function~$f$ in a neighborhood~$V$ of~$E$.
(We refer to results of this nature as ``support theorems'' since they in a way generalize Helgason's support theorem.)
Second, if this neighborhood is nice enough, $A=M\setminus V$ is a manifold with corners and Theorem~\ref{thm:rfl} is applicable.

Such generalized support theorems for manifolds are given below in Theorems~\ref{thm:UV-local} and~\ref{thm:K-spt}.
Using these, a suitable form of this support principle for broken ray transform is given in Theorem~\ref{thm:brt-spt}.

We do not formulate this two step procedure as a theorem since the geometry of the set~$V$ is difficult to control in terms of assumptions on~$M$ and~$E$.
A specific example where this idea works is given in Proposition~\ref{prop:ex-thm}(3).


\begin{theorem}[Theorem in \cite{UV:local-x-ray}]
\label{thm:UV-local}
Let $M$ be a manifold with boundary with dimension 3 or greater.
If $\partial M$ is strictly convex at $p\in\partial M$, then there is a neighborhood~$O$ of~$p$ such that the geodesic ray transform is injective in~$O$ in the following sense:
If the integral of a function in $f\in L^2(M)$ vanishes over all geodesics with interior in~$O$ and endpoints in $O\cap\partial M$, then~$f$ vanishes in~$O$.
\end{theorem}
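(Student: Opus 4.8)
The statement is the local invertibility theorem of Uhlmann and Vasy, and the plan is to reproduce the microlocal argument underlying their proof, whose central object is the \emph{normal operator} $N=I^{*}I$ of a localized geodesic ray transform. The key is to realize a conjugated version of $N$ as an elliptic operator in Melrose's scattering pseudodifferential calculus. First I would use strict convexity of $\partial M$ at $p$ to choose a function $x$, defined near $p$, with $x(p)=0$ and with level sets $\{x=c\}$ strictly convex as seen from the region $\{x>c\}$ to be reconstructed; such a foliation exists on a small neighborhood precisely because the second fundamental form is positive at~$p$. Fixing a small $c>0$, I would work in the thin lens-shaped region $O=\{x>-c\}$ (intersected with a coordinate box around $p$) and localize the transform to those geodesics that are nearly tangent to the level sets of $x$ and stay in~$O$. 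This localized transform $I_{c}$ sees only functions supported where $x>-c$, and every point of $O$ lies on a rich $(n-2)$-parameter family of such almost-tangent geodesics, where $n=\dim M\ge3$ leaves room for a sphere of tangent directions at each level set.

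The analytic heart of the argument is to treat the artificial boundary $\{x=0\}$ as a boundary at infinity in the sense of the scattering calculus and to conjugate the normal operator by an exponential weight. Concretely I would form $A_{F}=e^{F/x}\,N\,e^{-F/x}$ for a suitable positive constant $F$, and show that, written in the scattering variables $(x,y)$ near $\{x=0\}$, the operator $A_{F}$ lies in Melrose's algebra of scattering pseudodifferential operators and is \emph{elliptic} there. Ellipticity has two faces that must both be checked: ellipticity of the interior principal symbol, which follows from the standard fact that $N$ is an elliptic pseudodifferential operator of order $-1$ on the interior; and ellipticity of the boundary (scattering) symbol at $\{x=0\}$, which is where strict convexity and the exponential weight do the real work. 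The factor $e^{-F/x}$ forces the boundary symbol to be a Gaussian-type expression in the scattering fibre variable whose invertibility reduces to a positivity computation governed by the curvature of the level sets, and this is exactly the step that uses strict convexity quantitatively.

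With ellipticity in the scattering calculus in hand, the scattering parametrix construction produces an operator $B$ with $BA_{F}=\mathrm{Id}+R$ where $R$ is a residual (hence compact on the relevant weighted space) error; choosing the region thin enough (that is, $c$ small, which plays the role of a semiclassical parameter) makes $\|R\|<1$, so $A_{F}$ is invertible. Unwinding the conjugation, $I_{c}f=0$ forces $Nf=0$ and hence $f=0$ on $\{x>-c'\}$ for some $0<c'<c$; taking this slightly smaller region as the neighborhood $O$ of~$p$ gives the claimed local injectivity, and the same parametrix yields a reconstruction formula. I expect the main obstacle to be the verification that the conjugated normal operator genuinely belongs to the scattering calculus with the stated boundary behaviour: this demands a careful analysis of the Schwartz kernel of $N$ all the way down to the artificial boundary, tracking how the almost-tangential geodesics degenerate as $x\to0$. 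It is here that the hypothesis $\dim M\ge3$ is essential, since in two dimensions there are too few tangent geodesics for the boundary symbol to be elliptic and a genuinely different argument would be required.
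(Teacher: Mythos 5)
The paper does not prove this statement at all: it is imported verbatim as a black box from Uhlmann and Vasy (the bracketed attribution ``Theorem in \cite{UV:local-x-ray}'' is the entire justification), and it is then merely \emph{applied} in Theorem~\ref{thm:brt-spt}(1). So there is no in-paper proof to compare against; what you have written is an outline of the original Uhlmann--Vasy argument. As such an outline it is essentially accurate: the strictly convex foliation by level sets of a function $x$ near $p$, the lens-shaped region, the localization to almost-tangent geodesics, the conjugation of the normal operator by an exponential weight singular at the artificial boundary, ellipticity in Melrose's scattering calculus (interior symbol plus boundary symbol, the latter being where convexity and $\dim M\geq 3$ enter), and the parametrix/Neumann-series step for $c$ small are indeed the skeleton of their proof. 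Minor bookkeeping aside (the artificial boundary should sit at $\{x=-c\}$, i.e.\ at the inner face of the lens after a shift of the variable, not at $\{x=0\}$ where $p$ lies), the structure is right.

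That said, you should be clear that this is a proof \emph{plan}, not a proof. The two steps you yourself flag as the main obstacles --- verifying that the conjugated normal operator genuinely lies in the scattering calculus with the required kernel behaviour at the artificial boundary, and computing the boundary principal symbol explicitly enough to see that it is invertible --- are precisely the content of the Uhlmann--Vasy paper, and nothing in your sketch discharges them. In the context of the present article the honest options are either to cite \cite{UV:local-x-ray} as the author does, or to actually carry out the symbol computations; an outline that defers exactly the hard analysis does not constitute an independent proof. If your goal was to understand why the theorem is usable here, note also that the paper relies on one further feature of the Uhlmann--Vasy construction, stated in the proof of Theorem~\ref{thm:brt-spt}(1): the neighbourhood $O$ can be shrunk to lie inside any prescribed neighbourhood of $p$, so that $O\cap\partial M\subset E$. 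Any reproof should make that uniformity explicit, since it is what allows the local result to be applied with a partial set of tomography.
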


\begin{theorem}[\cite{K:spt-thm}]
\label{thm:K-spt}
Let $(M, \partial M, g)$ be a simple Riemannian manifold embedded in a slightly larger manifold $(\ext{M},\partial\ext{M},\ext{g})$ and assume that the metric~$\ext{g}$ is real analytic.
Let $\mathcal{A}$ be an open set of geodesics in~$\ext{M}$ such that that each geodesic $\gamma\in\mathcal{A}$ can be deformed to a point on the boundary $\partial\ext{M}$ by geodesics in~$\mathcal{A}$.
Let $M_\mathcal{A}$ be the set of points lying on the intersection of these geodesics with~$M$.
If $f\in L^2(M)$ is a function such that the integral of~$f$ is zero over every geodesic in $M$ that has an extension in $\mathcal{A}$, then $f=0$ on~$M_\mathcal{A}$.
\end{theorem}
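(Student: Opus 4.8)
The plan is to prove this by analytic microlocal analysis, in the spirit of Boman--Quinto adapted to geodesic transforms. The essential mechanism is that when $\ext{g}$ is real analytic, vanishing of $\rt{}f$ over an \emph{open} family of geodesics forces the corresponding conormal covectors out of the analytic wavefront set $\mathrm{WF}_A(f)$; this microlocal regularity is then combined with a first--contact (layer--stripping) argument to push the support of $f$ off of $M_{\mathcal A}$.

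First I would fix the microlocal framework. Since $(M,\partial M,g)$ is simple and embedded in the real-analytic manifold $(\ext{M},\ext{g})$, the geodesic flow is real analytic and there are no conjugate points, so $\rt{}$ is an analytic Fourier integral operator and its normal operator $N=\rt{}^{*}\rt{}$ is a genuine \emph{elliptic} analytic pseudodifferential operator of order $-1$ on a neighborhood of $M$. Extend $f$ by zero to $\ext{M}$; since $f\in L^2(M)$ the extension is $L^2$ with $\spt f\subset M$, and the integral of $f$ over a geodesic $\gamma\in\mathcal A$ equals the integral over $\gamma\cap M$, so by hypothesis it vanishes.

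The technical heart is the local regularity lemma: if $\rt{}f$ vanishes on an open neighborhood (in the space of geodesics) of a fixed geodesic $\gamma_0$, then no covector conormal to $\gamma_0$ lies in $\mathrm{WF}_A(f)$. I would obtain this by constructing an analytic microlocal parametrix for $N$ microlocalized to the cone of conormals swept out by the geodesics near $\gamma_0$; analytic ellipticity then yields the regularity. Granting this lemma, I would run the continuation. Fix $\gamma\in\mathcal A$ and a deformation $\gamma_s$, $s\in[0,1]$, with $\gamma_1=\gamma$ and $\gamma_0$ a point on $\partial\ext{M}$, all lying in $\mathcal A$. For $s$ near $0$ the curve $\gamma_s$ lies near $\partial\ext{M}$, hence outside $M$, so it misses $\spt f$. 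Suppose for contradiction that $f\not\equiv0$ on $\gamma\cap M$, and let $s_0$ be the first parameter at which $\gamma_{s_0}$ meets $\spt f$, with $x_0$ a first--contact point. There $\gamma_{s_0}$ touches the support from outside, so by Kashiwara's watermelon theorem (equivalently H\"ormander's analytic version of the support--wavefront relation) the conormal $\xi_0$ to a supporting hypersurface at $x_0$ lies in $\mathrm{WF}_A(f)$, and this $\xi_0$ is conormal to $\gamma_{s_0}$. As $\gamma_{s_0}\in\mathcal A$ and $\mathcal A$ is open, the local lemma forbids this, a contradiction. Hence $f=0$ on $\gamma\cap M$ for every $\gamma\in\mathcal A$, i.e.\ on $M_{\mathcal A}=\bigcup_{\gamma\in\mathcal A}(\gamma\cap M)$.

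\textbf{Main obstacle.} The analytic microlocal regularity lemma is by far the most delicate step: building an analytic (not merely $C^\infty$) parametrix for $N$ and controlling the analytic wavefront set requires the full machinery of analytic ${\Psi}$DOs, and one must verify that the conormal directions to nearby geodesics genuinely cover a conic neighborhood of the chosen covector. A secondary subtlety is making the first--contact step rigorous, ensuring that tangency of $\gamma_{s_0}$ to $\spt f$ produces exactly a conormal direction that the family $\mathcal A$ controls; the simplicity assumption (no conjugate points, ball-like $M$) is what guarantees the clean foliation by geodesics needed throughout.
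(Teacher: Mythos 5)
The paper does not prove this statement at all: it is imported verbatim (as the bracketed citation indicates) from Krishnan's support theorem paper \cite{K:spt-thm}, and is used here only as a black box in the proof of Theorem~\ref{thm:brt-spt}(2). So there is no in-paper proof to compare against; the relevant comparison is with the cited source. Your outline is faithful to that source's actual strategy: Krishnan's argument, following Frigyik--Stefanov--Uhlmann and the Boman--Quinto paradigm, consists of exactly the two ingredients you identify, namely (i) an analytic microlocal regularity lemma stating that vanishing of the transform on an open set of geodesics near $\gamma_0$ removes the conormal bundle of $\gamma_0$ from $\mathrm{WF}_A(f)$, and (ii) a first-contact argument along the deformation $\gamma_s$ using the Sato--Kawai--Kashiwara (watermelon) theorem to derive a contradiction at the touching point. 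The one place where your sketch diverges in mechanism from the reference is step (i): Krishnan does not build an analytic parametrix for the normal operator $N=\rt{}^*\rt{}$, but instead works directly with a localized weighted transform and Sj\"ostrand's complex stationary phase to control $\mathrm{WF}_A(f)$ in the conormal directions; the parametrix route you propose is heavier and would require additional care to microlocalize analytically. You also correctly flag the genuine subtlety in step (ii), that one must produce a supporting hypersurface at the first-contact point whose conormal is among those controlled by the open family $\mathcal{A}$; this is handled in the reference by constructing such a hypersurface from the geodesics near $\gamma_{s_0}$, which is where simplicity (no conjugate points) is used. As a proof \emph{sketch} your proposal is sound and matches the known argument; as written it leaves the two hard analytic steps as acknowledged gaps rather than errors.
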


\begin{theorem}
\label{thm:brt-spt}
Let $M$ be a manifold with boundary and $E\subset\partial M$ the set of tomography.
Suppose any one of the following:
\begin{enumerate}
\item $n\geq3$ and $E$ is open and strictly convex.
\item The metric is analytic and the manifold simple, the set~$E$ is open and strictly convex, and $M$ can be extended to slightly larger manifold~$\ext{M}$.
Any geodesic in~$M$ with endpoints in~$E$ can be extended to a geodesic $\ext{\gamma}$ in $\ext{M}$ and~$\ext{\gamma}$ can be deformed to a point on $\partial\ext{M}$ by geodesics that do not intersect~$\partial M\setminus E$.
\end{enumerate}
If the broken ray transform of a function $f\in L^2(M)$ vanishes, then~$f=0$ in some neighborhood of~$E$. Furthermore, in the case~(2) $f$ vanishes on each geodesic with endpoints in~$E$. 
\end{theorem}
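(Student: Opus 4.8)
The plan is to exploit the fact that the broken ray transform contains, as a special case, the integrals over those broken rays that undergo no reflection at all, i.e. over honest geodesics whose interior lies in~$\sisus M$ and whose two endpoints lie in~$E$. Thus a vanishing broken ray transform of~$f$ forces $\rt{}f(\gamma)=0$ for every such reflection-free geodesic~$\gamma$ with endpoints in~$E$. Each of the two hypotheses is tailored to feed exactly this information into one of the two support theorems quoted above, Theorem~\ref{thm:UV-local} in case~(1) and Theorem~\ref{thm:K-spt} in case~(2), so the whole argument is a localization-and-citation once this observation is in place.

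For case~(1) I would work pointwise. Fix $p\in E$; since~$E$ is open and strictly convex, $\partial M$ is strictly convex at~$p$, and Theorem~\ref{thm:UV-local} supplies a neighborhood~$O$ of~$p$ in which the local geodesic ray transform is injective. Shrinking~$O$ if necessary so that $O\cap\partial M\subset E$ (possible because~$E$ is open), every geodesic that Theorem~\ref{thm:UV-local} integrates against---interior in~$O$, endpoints in $O\cap\partial M$---is a reflection-free broken ray with both endpoints in~$E$: strict convexity guarantees that such short geodesics dip into~$\sisus M$ rather than graze the reflecting boundary. Hence all these integrals of~$f$ vanish, Theorem~\ref{thm:UV-local} gives $f=0$ on $O=O_p$, and $\bigcup_{p\in E}O_p$ is the desired neighborhood of~$E$ on which~$f$ vanishes.

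For case~(2) I would assemble the admissible family~$\mathcal{A}$ demanded by Theorem~\ref{thm:K-spt} out of the deformations provided by the hypothesis. Concretely, let~$\mathcal{A}$ be the (open) set of geodesics of~$\ext{M}$ obtained by taking, for every geodesic of~$M$ with endpoints in~$E$, its extension~$\ext{\gamma}$ together with all the geodesics occurring in a chosen deformation of~$\ext{\gamma}$ to a point of~$\partial\ext{M}$, each of which avoids $\partial M\setminus E$. By construction each member of~$\mathcal{A}$ can be deformed inside~$\mathcal{A}$ to a boundary point, as required. The crucial point is that a geodesic of~$\mathcal{A}$ never crosses the reflecting part $\partial M\setminus E$, so its intersection with~$M$ is a geodesic segment whose endpoints lie in~$E$; this is again a reflection-free broken ray, and the vanishing broken ray transform makes the integral of~$f$ over it zero. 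Theorem~\ref{thm:K-spt} then yields $f=0$ on~$M_\mathcal{A}$. Since~$\mathcal{A}$ contains the extension of every geodesic of~$M$ with endpoints in~$E$, the set~$M_\mathcal{A}$ contains all such geodesics, which gives the ``furthermore'' assertion; and these geodesics sweep out a full neighborhood of the strictly convex set~$E$, so~$f$ vanishes near~$E$.

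I expect the genuine work to be in the geometric bookkeeping rather than in any hard analysis, since the two support theorems do the heavy lifting. In case~(1) one must make sure the local geodesics really are reflection-free broken rays with endpoints in~$E$ (choosing~$O$ small and invoking strict convexity to keep interiors off the boundary), and in case~(2) one must check that the family~$\mathcal{A}$ can be arranged to be open and closed under the deformation to a boundary point while its members restrict to~$M$ as single geodesic segments with endpoints in~$E$. Verifying that these restricted segments carry no reflection---equivalently, that a deformation geodesic meets~$\partial M$ only in~$E$---is the step where the hypothesis that the deformations avoid $\partial M\setminus E$ is used essentially, and is the main obstacle.
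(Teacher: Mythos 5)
Your proposal is correct and follows essentially the same route as the paper: reflection-free broken rays with endpoints in~$E$ are ordinary geodesics, so the vanishing broken ray transform feeds directly into Theorem~\ref{thm:UV-local} (localized at each $p\in E$ with $O\cap\partial M\subset E$) for case~(1) and into Theorem~\ref{thm:K-spt} for case~(2). The paper dispatches case~(2) with a one-line citation, so your explicit construction of the admissible family~$\mathcal{A}$ and the remark that it must be open are, if anything, more detailed than the original.
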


\begin{proof}
(1) Fix any $p\in E$ and use Theorem~\ref{thm:UV-local} near it.
The set~$O$ in Theorem~\ref{thm:UV-local} is constructed so that it can be shrinked to be inside any given neighborhood of~$p$ in~$M$.
(For details, see~\cite{UV:local-x-ray}.)
Thus in the present case we may choose so that $O\cap\partial M\subset E$.

If the broken ray transform of a function $f\in L^2(M)$ vanishes, its integral over any geodesic with endpoints in~$E$ is zero.
Therefore~$f$ vanishes in $O$ by Theorem~\ref{thm:UV-local}. The conclusion holds for all $p\in E$, whence~$f$ vanishes in a neighborhood of~$E$.

(2)
Follows from Theorem~\ref{thm:K-spt}.
\end{proof}

\begin{remark}
Let $U$ be the neighborhood of~$E$ in which~$f$ vanishes in part~(1) of the above theorem.
If $\partial U$ (in the topology of~$M$) is strictly convex from the side of $M\setminus U$, the theorem may be used on it again.
Such layer stripping might show that~$f$ vanishes in a relatively large neighborhood (like in part~(2)), whose geometry can be controlled more strongly.
The global injectivity result of Uhlmann and Vasy~\cite{UV:local-x-ray} is based on such an argument.
\end{remark}

\begin{remark}
\label{rmk:bdd-spt}
If we know that the support a function $f\in L^2(M)$ is a positive distance away from (the connected set)~$\bar{E}$ in case~(2) of the above theorem, we do not need to extend~$M$.
First, we replace $f$ by zero outside the convex hull of~$E$; this does not alter the integral of $f$ over geodesics with endpoints in~$E$, but makes sure that~$f$ has compact support in~$\sisus M$.
If now $\eps=d(\partial M,\spt f)>0$, we can use part~(2) of the above theorem on the manifold $M^\eps=\{x\in M:d(x,\partial M)\geq\eps\}$.
\end{remark}

\begin{remark}
In the Euclidean case one can simply use Helgason's support theorem.
This support theorem can be viewed as a special case of part~(2) of the above theorem.
\end{remark}

\section{Examples and counterexamples}
\label{sec:ex}

We list below some examples using Theorems~\ref{thm:brt2} and~\ref{thm:brt3} (which in turn are based on Theorem~\ref{thm:rfl}) and some counterexamples.

\begin{proposition}[Examples]
\label{prop:ex-thm}
The broken ray transform is injective in the following manifolds (with or without corners):
\begin{enumerate}
\item Consider the quadrant of a sphere
\begin{equation}
A=\{x\in S^n\subset\R^{n+1}:x_1\geq0,x_2\geq0\}
\end{equation}
when $n\geq3$ and the set of tomography $E=\{x\in A:x_1=0,x_2>0\}$. The broken ray transform is injective in the class $\{f\in L^2(A):\spt f\cap\bar{E}=\emptyset\}$.
\item The previous example with $n=2$ in the class $\{f\in C^\infty(A):\spt f\cap\bar{E}=\emptyset\text{ and }f\text{ is }\infty\text{-even at }\{x_2=0\}\}$.
\item The hemisphere
\begin{equation}
M=\{x\in S^2\subset\R^3:x_2\geq0\}
\end{equation}
with the set of tomography $E=\{x\in\partial M:x_1<\eps\}$ for some $\eps>0$ in the class $\{f\in C^\infty(M):f\text{ is }\infty\text{-even at }\{x_2=0\}\}$.
\end{enumerate}
\end{proposition}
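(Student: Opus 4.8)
The plan is to apply Theorem~\ref{thm:rfl} in all three cases: reflect across the reflecting set so that broken rays unfold into geodesics, and then invoke a suitable injectivity or support result for the geodesic ray transform on the reflected manifold. In every case the reflecting set lies in the totally geodesic equator $\{x_2=0\}$, and the isometry $x_2\mapsto-x_2$ shows that this equator is $\infty$-flat and that the relevant functions are $\infty$-even there; by Lemma~\ref{lma:rfl-reg} the reflected metric is therefore smooth and $\rfl f$ inherits the regularity of $f$.

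For part~(1) I would reflect $A$ across $R=\{x_2=0\}$, so that the two copies fill the hemisphere $\rfl A=\{x\in S^n:x_1\ge0\}$ with boundary the equatorial $S^{n-1}=\{x_1=0\}$. Since $\bar E$ reflects onto this whole equator while $\spt f\cap\bar E=\emptyset$, the function $\rfl f$ is supported in $\{x_1\ge\delta\}$ for some $\delta>0$. The caps $\{x_1\ge t\}$ are strictly convex for every $t\in(0,1)$ and $\{x_1=1\}$ is a single point, so Theorem~\ref{thm:UV} applies to the cap $\{x_1\ge\delta\}$ embedded in $\ext X=S^n$. Each cap geodesic is a subarc of a hemisphere geodesic with endpoints on the equator, and $\rfl f$ vanishes off the cap, so a vanishing broken ray transform of $f$ gives, via Theorem~\ref{thm:rfl}, a vanishing geodesic ray transform of $\rfl f$ on the cap; hence $\rfl f=0$ and $f=0$. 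Part~(2) is identical except that $n=2$ rules out Theorem~\ref{thm:UV}: here $\rfl f$ is smooth by the $\infty$-evenness hypothesis, the cap $\{x_1\ge\delta\}$ has angular radius $<\pi/2$ and is therefore a simple surface, and I would finish with the injectivity of the ray transform on simple surfaces~\cite{SU:surface}.

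Part~(3) is the genuine two-step argument of Section~\ref{sec:spt-corner} and is the one that needs real care. Reflecting the hemisphere across the full equator produces $S^2$, on which $\rfl f$ is smooth by $\infty$-evenness and, crucially, $S^2$ is real analytic. In the first step the hypothesis that $E$ is \emph{slightly larger} than a half-equator is exactly what supplies great-circle arcs with both endpoints in $E$ (a half-equator would supply essentially none), and I would feed these arcs into the analytic support theorem, Theorem~\ref{thm:K-spt} (as packaged in Theorem~\ref{thm:brt-spt}(2)), to recover $f$ on a neighborhood $V$ of $\bar E$. In the second step I would arrange $V$ so that $A=M\setminus V$ is a spherical cap around $(1,0,0)$ whose reflecting boundary lies in $R$; because $R$ is slightly shorter than a half-equator, reflecting $A$ across it yields a cap of angular radius $<\pi/2$, again simple, so Theorem~\ref{thm:brt2} (equivalently~\cite{SU:surface} after reflection) recovers $f$ on $A$.

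The hard part is precisely what separates part~(3) from parts~(1)--(2): the equator is totally geodesic rather than strictly convex, so the local result Theorem~\ref{thm:UV-local} is unavailable and, worse, broken rays on the hemisphere degenerate into orbits that merely bounce between antipodal points. Consequently a naive reflection to $S^2$ only reconstructs the Funk transform of $\rfl f$, whose kernel contains all antipodally odd functions, so the reflection step alone cannot determine $f$. The two-step procedure is designed to bypass this kernel by combining analyticity with the extra arcs furnished by the enlarged $E$, and the delicate point to be checked is that the neighborhood $V$ from the first step can be chosen so that the leftover region $A$ is a bona fide strictly convex cap on which the reflection argument of the second step is valid.
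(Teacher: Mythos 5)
Your proposal is correct and follows essentially the same route as the paper: in parts (1)--(2) the support hypothesis is used to retreat to the strictly convex cap $\{x_1\geq\delta\}$ after reflecting across $\{x_2=0\}$, and injectivity there comes from Theorem~\ref{thm:UV} (for $n\geq3$) or from simplicity of the small cap and~\cite{SU:surface} (for $n=2$) --- the paper merely packages these two steps as Theorems~\ref{thm:brt3} and~\ref{thm:brt2}. Part (3) is likewise the paper's two-step argument: the analytic support theorem (Theorem~\ref{thm:brt-spt}(2), using the arcs with both endpoints in the enlarged $E$) kills $f$ on $\{x_1\leq\eps\}$, and part (2) applied to the remaining region finishes the proof.
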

\begin{proof}
(1) We use Theorem~\ref{thm:brt3}.
Suppose~$f$ in the given class has vanishing broken ray transform.
Define $A^\eps=\{x\in A:x_1\geq\eps\}$ and let~$\eps$ be so small that $\spt f\subset A^\eps$.
Now~$f$ has vanishing broken ray transform in~$A^\eps$ with the set of tomography $E^\eps=\{x\in A:x_1=\eps,x_2>0\}$.

Let $M=\{x\in S^n\subset\R^{n+1}:x_2\geq0\}$ and $\rho(x)=C-d_M(x,e_1)^2$, where~$d_M$ is the intrinsic (Riemannian) metric on~$M$, $e_1=(1,0,\dots,0)\in A$, and~$C$ is a constant chosen so that $\rho(x)=0$ whenever $x\in E^\eps$.
By Theorem~\ref{thm:brt3} the broken ray transform is injective on~$A^\eps$, whence~$f=0$.

(2) This is the same as part~(1), only with Theorem~\ref{thm:brt2} instead of~\ref{thm:brt3}.

(3) Let $f$ be an unknown function in the given class with vanishing broken ray transform.
By Theorem~\ref{thm:brt-spt} (used in the sense of Remark~\ref{rmk:bdd-spt})~$f$ vanishes in $\{x_1\leq\eps\}$ which lies in the union of geodesics with endpoints in~$E$.
By part~(2) above~$f$ vanishes in $\{x_1\geq\eps\}$, too.
\end{proof}

\begin{proposition}[Counterexamples]
\label{prop:ctr-ex}
The broken ray transform fails to be injective on the following kinds of manifolds~$M$ and sets of tomography~$E$:
\begin{enumerate}
\item The manifold~$M$ is such that the geodesic ray transform is not injective, e.g. a one dimensional manifold.
$E\subset\partial M$ may be anything.
\item The manifold $M$ contains a reflecting tubular part: for $\Omega\subset\R^n$, $n\geq1$, a bounded $C^1$ set and $L>0$, the manifold with boundary $N=\bar{\Omega}\times(0,L)$ embeds isometrically to $M$ such that $\partial N=\partial\Omega\times(0,L)$ is mapped to the complement of~$E$.
\item The manifold $M$ contains a reflecting generalized tubular part: for $N_1$ and $N_2$ manifolds with boundary such that the geodesic ray transform on~$N_2$ is not injective, the manifold $N_1\times N_2$ embeds isometrically to~$M$ such that $\partial N_1\times N_2$ is mapped to the complement of~$E$.
We must have $\partial N_2\neq\emptyset$ but can have $\partial N_1=\emptyset$.
\end{enumerate}
Parts~(1) and~(3) hold for the function classes where the geodesic ray transform is non-injective.
Part~(2) holds for the class~$C_0^\infty$.
\end{proposition}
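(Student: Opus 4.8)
The plan is to exhibit, in each case, a nonzero function whose broken ray transform vanishes identically; the common mechanism is that a broken ray splits into pieces on each of which the integral of the chosen $f$ is (a rescaling of) a geodesic ray transform that is forced to vanish. For part~(1) this is immediate: a broken ray $\gamma$ decomposes into geodesic segments $\gamma_1,\dots,\gamma_k$ between consecutive reflection points, and since every reflection point and endpoint lies on $\partial M$, each $\gamma_j$ is a geodesic in $\sisus M$ joining boundary points, i.e. $\gamma_j\in\Gamma(M)$. If the geodesic ray transform on $M$ is not injective, I pick $f\neq0$ with $\rt{}f(\sigma)=0$ for every $\sigma\in\Gamma(M)$; then $\brt{}f(\gamma)=\sum_j\rt{}f(\gamma_j)=0$ for every broken ray, so the broken ray transform is not injective either. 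For $M=[0,1]$ the geodesic ray transform is just $f\mapsto\int_0^1 f$, which kills any $f\neq0$ of zero mean.

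For part~(2) I would make the degeneracy explicit in flat coordinates. Writing points of the tube as $(x,s)\in\bar\Omega\times(0,L)$, I choose $u\in C_0^\infty((0,L))$ with $u\not\equiv0$ and $\int_0^L u\,\der s=0$, and set $f(x,s)=u(s)$ on $N$ and $f=0$ elsewhere; since $\spt u$ is compact in $(0,L)$, this $f$ is smooth and compactly supported, so $f\in C_0^\infty$. The lateral wall $\partial\Omega\times(0,L)$ is reflecting and its normal is horizontal, so reflection there leaves the $s$-component of the velocity unchanged; hence along any maximal portion of a broken ray lying in the tube the coordinate $s$ moves at a constant speed $v_s$. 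Portions with $v_s=0$ stay at a fixed height in $(0,L)$ and never reach the caps $\bar\Omega\times\{0,L\}$, so they cannot belong to a genuine broken ray, whereas a portion with $v_s\neq0$ runs from one cap to the other, and the change of variables $\sigma=s(t)$ gives $\int u(s(t))\,\der t=\abs{v_s}^{-1}\int_0^L u\,\der s=0$. Summing over portions (the part of $\gamma$ outside the tube contributes nothing because $f=0$ there) yields $\brt{}f(\gamma)=0$ for all broken rays, while $f\neq0$.

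Part~(3) is the same computation on a Riemannian product. I choose $f_2\neq0$ on $N_2$ with $\rt{}f_2\equiv0$ and set $f(x_1,x_2)=f_2(x_2)$, which lies in the required class since $N_1$ is compact. Here I would use two standard facts: geodesics in $N_1\times N_2$ project to constant-speed geodesics in each factor, and reflection off $\partial N_1\times N_2$ preserves the $N_2$-component of the velocity. Consequently the $N_2$-projection of a maximal in-tube portion of a broken ray is a single geodesic of constant speed $\lambda_2$; portions with $\lambda_2=0$ never reach $N_1\times\partial N_2$ and so are excluded, while for $\lambda_2\neq0$ the projection runs from $\partial N_2$ to $\partial N_2$ and hence lies in $\Gamma(N_2)$. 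A reparametrisation then shows the contribution of such a portion equals $\lambda_2^{-1}\rt{}f_2$ of the projected geodesic, which vanishes, so $\brt{}f=0$ while $f\neq0$. The hypothesis $\partial N_2\neq\emptyset$ is exactly what makes the caps $N_1\times\partial N_2\subset E$ available as endpoints, and $\partial N_1=\emptyset$ is harmless because it only removes reflections.

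The step I expect to require the most care is the behaviour of reflections in the product: verifying that the transverse velocity (the $N_2$-component, or $v_s$ in part~(2)) is genuinely preserved by each reflection, and that every maximal in-tube portion projects to a complete boundary-to-boundary geodesic in the factor where the ray transform is non-injective. One must also dispose of the degenerate zero-transverse-speed trajectories and of broken rays that enter and leave the tube several times, but the former are not admissible broken rays and the latter contribute portionwise, so neither obstructs the argument; checking that $f$ belongs to the stated function class is then routine.
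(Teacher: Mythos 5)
Your proof is correct and takes essentially the same route as the paper: part~(1) by decomposing broken rays into boundary-to-boundary geodesic segments, and parts~(2)--(3) by pulling back a zero-mean (resp.\ ray-transform-killed) function from the transverse factor and using that reflections off the tube walls preserve the transverse component of the velocity. The extra care you take with the degenerate zero-transverse-speed trajectories and with rays entering the tube several times only makes explicit what the paper leaves implicit.
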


\begin{proof}
(1) In the case $E=\partial M$ the broken ray transform is the geodesic ray transform.
If the broken ray transform with set of tomography~$E$ is not injective, it is not injective with any set of tomography $E'\subset E$ either.

(2) Take a function $g:(0,L)\to\R$ such that $\int_0^Lg(t)\der t=0$ but~$g$ does not vanish identically.
Define $f:N\to\R$ by $f(x,t)=g(t)$.
Using the embedding, extend~$f$ by zero to~$M$.
We claim that the broken ray transform of~$f$ vanishes.

It suffices to show that~$f$ integrates to zero on any (unit speed) broken ray~$\gamma$ in~$N$ starting at $\bar{\Omega}\times\{0\}$ and ending at $\bar{\Omega}\times\{L\}$.
Let $v=(0,\dots,0,1)\in\R^{n+1}$ be the unit vector normal to the hypersurfaces $\Omega\times\{s\}$.
Possible reflections at $\partial\Omega\times[0,L]$ are such that $\gamma'\cdot v$ is preserved.
Thus the integral over the broken ray becomes (up to a multiplicative constant) the integral of~$g$ over $(0,L)$, which vanishes.

(3) There is a function~$g$ in~$N_2$ such that it integrates to zero over all maximal geodesics in~$N_2$.
Define $f:N_1\times N_2\to\R$ by $f(x_1,x_2)=g(x_2)$.
For a unit speed broken ray in $N_1\times N_2$ with both endpoints in $N_1\times\partial N_2$ the $N_2$ component of the gradient is conserved in reflections and along geodesics.
Thus $f$ integrates to zero over any such broken rays just like in part~(2).
\end{proof}

\begin{remark}
Part~(2) of the above proposition is related to the fact that the geodesic ray transform on a one dimensional manifold is not injective; a function on the real line cannot be recovered from its integral.
Part~(3) naturally generalizes this observation.
\end{remark}

As an example of part~(3) with $\partial N_1=\emptyset$ we mention $N_1\times N_2=S^n\times(0,L)$.

There is a counterexample to the counterexample given in Proposition~\ref{prop:ctr-ex} which warns us that some counterexamples may fail when attenuation is introduced.
We give this as the following proposition.
The result could be given for more general manifolds and broken rays, but we only state it here for the simple cylindrical case.

\begin{proposition}
\label{prop:ex-att}
Consider the manifold $M=[0,L]\times S^1$ with boundary. Let $a\geq0$ be a constant attenuation coefficient.
For a function $g\in C([0,L])$ define $f_g:M\to\R$ by $f_g(x,y)=g(x)$.
\begin{enumerate}
\item If $a=0$, there is a nonzero function $g\in C([0,L])$ such that the ray transform $\rt{a}f_g$ vanishes.
\item If $a>0$, there is no nonzero function $g\in C([0,L])$ such that the ray transform $\rt{a}f_g$ vanishes.
\end{enumerate}
\end{proposition}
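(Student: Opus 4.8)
The plan is to unroll the flat cylinder, parametrize the geodesics joining boundary points by a single scalar, compute the attenuated transform explicitly, and reduce both claims to a one-variable statement about $\int_0^L g(u)e^{\lambda u}\,\der u$. Passing to the universal cover $[0,L]\times\R$, geodesics of the product metric lift to straight segments; those with nonzero axial velocity cross transversally from $\{x=0\}$ to $\{x=L\}$, and these are the ones I use (the remaining ones wind around a single circle $\{x\}\times S^1$). A unit-speed transversal geodesic making angle $\theta$ with the axis, so that $c:=\cos\theta\in(0,1]$, has length $L/c$ and first coordinate $x(t)=ct$. Since $f_g$ depends on $x$ only, the substitution $u=ct$ gives
\begin{equation}
\rt{a}f_g(\gamma)=\int_0^{L/c} g(ct)\,e^{at}\,\der t=\frac1c\int_0^L g(u)\,e^{(a/c)u}\,\der u,
\end{equation}
which is independent of the starting point on $S^1$ and depends on $\gamma$ only through $c$. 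As the direction varies, $c$ sweeps all of $(0,1]$, so $\rt{a}f_g$ vanishes on every geodesic if and only if $\int_0^L g(u)e^{\lambda u}\,\der u=0$ for every $\lambda$ in the range of $a/c$.

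For part~(1), with $a=0$ the exponent is identically $1$ and the condition collapses to the single equation $\int_0^L g=0$. Any nonzero continuous $g$ of vanishing mean (for instance $g(u)=u-L/2$) then satisfies $\rt{0}f_g\equiv0$, which proves the claim.

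For part~(2), with $a>0$ the parameter $\lambda=a/c$ ranges over the whole half-line $[a,\infty)$, so the hypothesis $\rt{a}f_g\equiv0$ becomes
\begin{equation}
\Phi(\lambda):=\int_0^L g(u)\,e^{\lambda u}\,\der u=0\qquad\text{for all }\lambda\ge a.
\end{equation}
I would conclude $g=0$ as follows. Since $\Phi$ vanishes on the nondegenerate interval $[a,\infty)$, all of its derivatives vanish there as well; differentiating under the integral sign and evaluating at $\lambda=a$ gives $\int_0^L\big(g(u)e^{au}\big)u^k\,\der u=0$ for every $k\in\N$. Thus the continuous function $G:=g\,e^{a\cdot}$ has all polynomial moments zero on $[0,L]$, and Weierstrass approximation forces $\int_0^L G^2=0$, hence $G\equiv0$ and therefore $g\equiv0$. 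Equivalently, $\Phi$ extends to an entire function which, vanishing on a set with an accumulation point, vanishes identically, and then evaluating at $\lambda\in\frac{2\pi i}{L}\Z$ kills all Fourier coefficients of $g$.

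The only genuine content is the passage from the vanishing of $\Phi$ on a half-line to $g\equiv0$; this is where the hypothesis $a>0$ is essential, since it is precisely what makes $\lambda=a/c$ range over an interval rather than degenerating, as it does when $a=0$, to the single value $0$. The contrast between a one-point and a one-parameter family of exponential weights is exactly what separates the two parts of the proposition, so I expect that reduction and the invocation of analyticity (or the moment argument) to be the crux, the geometry being routine.
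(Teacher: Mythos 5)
Your argument is correct and essentially the paper's own: both parametrize the transversal geodesics by the axial velocity $c=\cos\theta$ (the paper's $b$), reduce the hypothesis to the vanishing of $\lambda\mapsto\int_0^L g(u)e^{\lambda u}\,\der u$ on the half-line swept out by $a/c$, and conclude by analyticity of this Laplace-type transform (the paper invokes real-analyticity of $\mathcal L g$ directly, you spell it out via moments and Weierstrass, which is the same mechanism). The only cosmetic difference is the sign of the exponential weight --- you follow Definition~\ref{def:rt-brt} literally with $e^{+at}$ while the paper's proof writes $e^{-at}$ --- and this changes nothing in either part.
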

\begin{proof}
(1) Choose a smooth~$g$ which integrates to zero as in the proof of Proposition~\ref{prop:ctr-ex}(2).

(2) Consider geodesics from $\{0\}\times S^1$ to $\{L\}\times S^1$ of the form $\gamma_b:[0,L/b]\to M$, $\gamma_b(t)=(bt,\exp(i\sqrt{1-b^2}t))$, where $b\in(0,1]$.
(All geodesics are of this form up to trivial transformations.)
We wish to show that if $\rt{a}f_g(\gamma_b)=0$ for all $b\in(0,1]$, then~$g=0$.

After extending~$g$ by zero to $[0,\infty)$ we find
\begin{equation}
\begin{split}
\rt{a}f_g(\gamma_b)
&=
\int_0^{L/b}e^{-at}f_g(\gamma_b(t))\der t
\\&=
\int_0^\infty e^{-at}g(bt)\der t
\\&=
b^{-1}\lap g(a/b),
\end{split}
\end{equation}
where $\lap g(s)=\int_0^\infty e^{-st}g(t)\der t$ is the Laplace transform of~$g$.
Thus, if $\rt{a}f(\gamma_b)=0$ for all $b\in(0,1]$, we have that $\lap g(s)=0$ for all $s\in[a,\infty)$.

Since~$g$ is bounded and has compact support,~$\lap g$ is real analytic on $(0,\infty)$.
But~$\lap g$ vanishes in $[a,\infty)$, so~$\lap g=0$.
It follows from the properties of the Laplace transform that~$g=0$.
\end{proof}

\section{The periodic broken ray transform}
\label{sec:periodic}

In analogue to the broken ray transform introduced in the beginning of Section~\ref{sec:intro}, we now turn to the periodic broken ray transform.
In this case the entire boundary~$\partial M$ is reflecting and the integrals of the unknown function are known over periodic broken rays.

Periodic broken rays are analogous to periodic geodesics on a closed manifold, and this analogy is made precise in the proof of the following two propositions.
Guillemin and Kazhdan~\cite{GK:spectral2D} reduced spectral rigidity of negatively curved closed Riemannian surfaces to determining a function from its integrals over all periodic geodesics.
We therefore expect spectral rigidity of negatively curved surfaces with boundary to be related to determining a function from its integrals over all periodic broken rays.
Lengths of periodic broken rays (or periodic billiard orbits) play an important role in spectral geometry (see~\cite{DH:spectral-survey}).
Since linearizing lengths of geodesics with respect to the metric leads to X-ray transforms, the periodic broken ray transform can be expected to have applications in spectral geometry.

In the introductory examples of Section~\ref{sec:ex1} and more generally in Theorem~\ref{thm:rfl} the injectivity of broken ray transforms was reduced to injectivity of certain related geodesic ray transforms via reflections.
The geodesics and broken rays considered there joined two points on the boundary or the set of tomography.

The same idea can be carried over to the case of the periodic broken ray transform.
We study this idea briefly in this section.
The periodic broken ray transform were defined in definition~\ref{def:pbrt}.


It is clear that the periodic broken ray transform fails to be injective if there are too few periodic broken rays on the manifold.
We consider below specific examples, where the geometry allows for a large number of periodic broken rays.

\begin{proposition}
\label{prop:ex-S/8}
The periodic broken ray transform is injective for the Riemannian manifold with boundary $M=\{(x_1,x_2,x_3)\in S^{2}:x_i\geq0\forall i\}$ when restricted to smooth functions with vanishing normal derivatives of odd order at the boundary.
\end{proposition}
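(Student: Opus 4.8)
The plan is to exploit the reflection philosophy of Theorem~\ref{thm:rfl}, adapted to the periodic setting: the octant $M=\{x\in S^2:x_i\geq0\}$ is a fundamental domain for the action of the reflection group generated by the three coordinate-hyperplane reflections, and repeatedly reflecting $M$ across its three boundary arcs tiles the whole sphere $S^2$ into eight copies. First I would make this tiling explicit: the group $G=\{\pm1\}^3$ (sign changes of the three coordinates) acts on $S^2$, the octant $M$ is a strict fundamental domain, and the eight images $gM$ for $g\in G$ cover $S^2$ with disjoint interiors, meeting along the great-circle arcs $\{x_i=0\}$. Each boundary arc of $M$ lies on a great circle which is totally geodesic in $S^2$, so its second fundamental form vanishes; by Lemma~\ref{lma:rfl-reg}(1) this means each reflecting arc is $1$-flat, and since these arcs are pieces of great circles the flatness is in fact $\infty$-flat. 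Consequently, for a smooth $f$ on $M$ with vanishing odd-order normal derivatives at $\partial M$, the reflected extension $\rfl{f}$ defined by $\rfl{f}=f\circ p$ (where $p:S^2\to M$ is the natural folding projection sending $gx\mapsto x$) is a well-defined smooth function on all of $S^2$, by Lemma~\ref{lma:rfl-reg}(3) applied across each arc. Note that $S^2$ here plays the role of $\rfl{A}$, except the doubling is replaced by an eightfold unfolding; the corners of $M$ (at the three points where two arcs meet, e.g.\ $e_1,e_2,e_3$) unfold smoothly because the opening angles are $\pi/2$ and the arcs meet orthogonally, analogous to the integer-cone case of Proposition~\ref{prop:ex-cone}(1).

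The crucial correspondence is between periodic broken rays in $M$ and closed geodesics in $S^2$. A periodic broken ray $\gamma$ in $M$ reflects off the boundary arcs according to the reflection law, and under the unfolding $p$ such a broken ray lifts to a curve that is a genuine geodesic across each arc (since the arcs are totally geodesic and the reflection law is exactly the condition that the geodesic continues straight upon unfolding). Because $\gamma$ is periodic, its lift closes up after passing through an appropriate sequence of copies $gM$; thus $\gamma$ corresponds to a closed geodesic $\phi$ on $S^2$, and conversely every closed geodesic on $S^2$ (these are the great circles) projects under $p$ to a periodic broken ray on $M$. I would record the integral identity
\begin{equation}
\int_\phi \rfl{f}\,\der\h^1 = \int_{\tr(\gamma)}f\,\der\h^1,
\end{equation}
which holds because $\rfl{f}\circ\phi = f\circ\gamma$ pointwise along the unfolded trajectory, exactly as in Eq.~\eqref{eq:rfl-int} and the proof of Theorem~\ref{thm:rfl}. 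Hence if the periodic broken ray transform of $f$ vanishes, then $\rfl{f}$ integrates to zero over every great circle of $S^2$, i.e.\ over every closed geodesic.

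The final step is to invoke injectivity of the geodesic X-ray transform over closed geodesics on $S^2$: a smooth function on the round sphere whose integral over every great circle vanishes must itself vanish. This is the periodic analogue of the injectivity results used throughout, and it is the classical statement that the even part of a function on $S^2$ is determined by its great-circle integrals (the Funk--Radon transform is injective on even functions). The point is that $\rfl{f}$ is even under the antipodal map, since $\rfl{f}=f\circ p$ and $p$ factors through the $G$-action which contains $-\id$ (because $-\id=(-1,-1,-1)\in G$); thus $\rfl{f}$ is genuinely even and the Funk--Radon transform recovers it uniquely. From $\rfl{f}=0$ on $S^2$ and the identity $\rfl{f}\circ\iota = f$ on $M$ (the analogue of Eq.~\eqref{eq:proj-inj}), I conclude $f=0$.

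The main obstacle I expect is not the sphere injectivity input, which is classical, but rather the careful verification that the unfolding $p$ is genuinely smooth at the three corner points $e_1,e_2,e_3$ where two reflecting arcs meet. Here the local model is $\R^0\times[0,\infty)^2$ (a planar right-angle corner) unfolding to $\R^2$, and one must check that a smooth $\infty$-even function on the quadrant extends smoothly after fourfold reflection around the corner. This is exactly the subtlety flagged in Section~\ref{sec:rfl-constr} regarding higher-depth strata and ``corner normal coordinates''; the orthogonal meeting of the arcs and the $\infty$-even hypothesis are precisely what make the extension smooth, but the argument requires either an explicit local coordinate computation or an appeal to the fact that the $\pi/2$ opening angle makes the corner group $\{\pm1\}^2$ act with a smooth fundamental-domain quotient. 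A secondary technical point is confirming that there are enough periodic broken rays: one must check that great circles in generic position avoid the corner points (a measure-zero obstruction) so that their projections are honest periodic broken rays whose reflection points all lie in the smooth part $R$ of the boundary.
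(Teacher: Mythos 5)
Your proposal is correct and follows essentially the same route as the paper: unfold the octant to all of $S^2$ via $p(x)=(\abs{x_1},\abs{x_2},\abs{x_3})$, match periodic broken rays with great circles, and conclude from the fact that the kernel of the Funk--Radon transform consists of odd functions while $\rfl{f}$ is even by construction. The paper states this last step as ``vanishing great-circle integrals force $\rfl{f}$ to be odd, but $\rfl{f}$ is even, hence zero,'' which is the same argument you give; your extra care about smoothness of the unfolding at the corner points $e_1,e_2,e_3$ addresses a point the paper passes over with ``it follows from the assumptions that $\rfl{f}$ is smooth.''
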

\begin{proof}
Let $f:M\to\R$ be a smooth function with vanishing normal derivatives of odd order at the boundary such that $\brt{} f=0$.
We need to show that~$f=0$.

Define a map $p:S^2\to M$ such that $p(x,y,z)=(\abs{x},\abs{y},\abs{z})$.
Let $\rfl{f}:S^2\to\R$ be defined simply by $\rfl{f}=f\circ p$.
It follows from the assumptions that~$\rfl{f}$ is smooth.

If~$\phi$ is a (closed) geodesic in~$S^2$ (a great circle), then $p\circ\phi\in\Gamma$; in fact, if~$\rfl{\Gamma}$ is the set of geodesics in~$S^2$, then $p\circ\rfl{\Gamma}=\Gamma$.
(Note that this argument relies on the geometry of~$S^2$. In particular, all geodesics are closed.)

Let~$\rt{}$ be the geodesic ray transform in~$S^2$.
For any $\gamma\in\Gamma$ the set $(p\circ{})^{-1}(\gamma)=\{\phi\in\rfl{\Gamma}:p\circ\phi=\gamma\}$ is nonempty and finite and $\rt{}\rfl{f}(\phi)=\brt{} f(\gamma)$ for all $\phi\in (p\circ{})^{-1}(\gamma)$.
Thus
\begin{equation}
\rt{}\rfl{f}(\phi)=0\text{ for all }\phi\in\rfl{\Gamma}
\Leftrightarrow
\brt{} f(\gamma)=0\text{ for all }\gamma\in\Gamma.
\end{equation}
Therefore the assumption $\brt{} f=0$ implies that~$\rt{}\rfl{f}=0$.

If the geodesic ray transform of a continuous function $\rfl{f}:S^2\to\R$ vanishes, then~$\rfl{f}$ is odd in the sense that $\rfl{f}(-x)=-f(x)$ for all $x\in S^2$ by~\cite[Theorem~1.13 on page~9]{PM:ipgd-notes}.
By construction, however, $\rfl{f}$ is also even, so~$\rfl{f}=0$.
Therefore~$f=0$.
\end{proof}

\begin{proposition}
\label{prop:pbrt-cube}
The periodic broken ray transform is injective in the unit cube $[0,1]^n\subset\R^n$, $n\geq2$, in the class of functions $\{f\in C^n([0,1]^n):f$ is $n$-odd at the boundary$\}$.
\end{proposition}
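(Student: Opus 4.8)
The strategy mirrors the proof of Proposition~\ref{prop:ex-S/8}, but with the flat torus $(\R/2\Z)^n$ playing the role of $S^2$ as the closed manifold with abundant periodic geodesics. First I would construct the covering map $p:(\R/2\Z)^n\to[0,1]^n$ that folds each coordinate circle $\R/2\Z$ onto $[0,1]$ by the tent map $t\mapsto \operatorname{dist}(t,2\Z)$, i.e. reflecting across both $0$ and $1$. This is the natural $n$-fold iterate of the one-dimensional reflection, and it realizes the cube as the quotient of the torus by the group of coordinatewise reflections, exactly the higher-dimensional analogue of the gluing remarks following Proposition~\ref{prop:ex-cone}. Note $(\R/2\Z)^n$ has corners-free (indeed empty) boundary, and strata of the cube of all depths $0,\dots,n$ appear, as anticipated in Section~\ref{sec:rfl-constr}.

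\textbf{Key steps.} Let $f\in C^n([0,1]^n)$ be $n$-odd at the boundary with $\brt{}f=0$, and set $\rfl{f}=f\circ p$. First I would verify $\rfl{f}\in C^n((\R/2\Z)^n)$: smoothness away from the reflection hyperplanes is automatic, and across each hyperplane $\{x_i\in\Z\}$ the tent-map folding produces an even reflection in the $x_i$ variable, so $\rfl{f}$ is $C^n$ there precisely because $f$ has vanishing normal derivatives of odd order up to order $n$ at the corresponding face (this is the parity bookkeeping of Definition~\ref{def:even-flat} applied coordinate by coordinate). Second, I would establish the correspondence between periodic broken rays in the cube and closed geodesics on the torus. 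A closed geodesic on $(\R/2\Z)^n$ is a line of rational slope; unfolding a periodic broken ray in $[0,1]^n$ (whose reflections obey the billiard law on the faces) gives exactly such a straight line on the torus, and conversely $p$ sends each closed geodesic to a periodic broken ray. For any periodic broken ray $\gamma$ the fiber $(p\circ\,)^{-1}(\gamma)$ is nonempty and finite, and the integral is preserved: $\rt{}\rfl{f}(\phi)=\brt{}f(\gamma)$ for each $\phi$ in the fiber, exactly as in the proof of Proposition~\ref{prop:ex-S/8}. Hence $\brt{}f=0$ forces the periodic (closed-geodesic) ray transform of $\rfl{f}$ on the flat torus to vanish. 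Third, I would invoke injectivity of the periodic geodesic ray transform on the flat torus up to the obvious obstruction: on $(\R/2\Z)^n$ the integral of a function over all closed geodesics determines it up to functions whose integral over every rational-slope line vanishes, which via Fourier series means $\rfl{f}$ must be supported on Fourier modes orthogonal to their own direction---and a short Fourier computation shows the only even such function is constant, in fact $0$ once one uses that $\rfl{f}$ is built by folding.

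\textbf{The main obstacle} is the final injectivity input on the torus: unlike $S^2$, where one quotes the clean odd/even dichotomy of~\cite{PM:ipgd-notes}, the flat torus requires an honest Fourier-analytic argument. The cleanest route is to expand $\rfl{f}=\sum_{k\in\Z^n}c_k e^{\pi i k\cdot x}$ and observe that integrating over a closed geodesic in a rational direction $v$ annihilates all modes with $k\cdot v\neq 0$ and sums the resonant ones; demanding this vanish for every rational direction kills all modes except possibly $k=0$. The evenness of $\rfl{f}$ under each coordinate reflection means $c_k$ is supported on $k$ with a fixed parity structure, and one checks directly that no nonzero frequency survives, so $\rfl{f}$ is constant; since $\brt{}f=0$ includes short periodic orbits of nonzero length this constant is $0$. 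I would present this Fourier step carefully, as it is where the geometry of the torus (all directions with rational slope are closed, and they are dense enough to separate frequencies) does the real work, and it is the step with no off-the-shelf citation analogous to the $S^2$ case.
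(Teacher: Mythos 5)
Your construction is exactly the paper's: fold the torus $(\R/2\Z)^n$ (the paper writes it as $[0,2]^n$ with opposite faces identified) onto the cube by the coordinatewise tent map, check that the $n$-oddness of $f$ at the faces makes $\rfl{f}=p^*f$ a $C^n$ function on the torus, and match periodic broken rays in the cube with closed geodesics on the torus so that vanishing of $\brt{}f$ forces the closed-geodesic transform of $\rfl{f}$ to vanish. The only divergence is the final injectivity input on the torus: the paper simply cites the result of Abouelaz and Rouvi\`ere~\cite{AR:radon-torus} (which also hands over an inversion formula and range characterization), whereas you prove it by hand with Fourier series. Your Fourier step is correct and in fact simpler than you make it sound: integrating $e^{\pi i k\cdot x}$ over a closed geodesic of direction $v\in\Z^n$ annihilates every mode with $k\cdot v\neq0$ and leaves $L\,e^{\pi i k\cdot x_0}$ for the resonant ones, so varying the base point $x_0$ kills all $c_k$ with $k\perp v$; since for $n\geq2$ every nonzero $k\in\Z^n$ is orthogonal to some nonzero integer vector, all nonzero frequencies die regardless of parity, and $c_0=0$ because constants integrate to $L c_0$ over an orbit of length $L>0$. (The appeal to evenness in your third step is therefore unnecessary, though harmless.) One small point worth a sentence in a careful write-up, which the paper also elides: the closed geodesics whose projections hit edges or corners of the cube do not correspond to genuine billiard orbits, but these form a null set of base points for each direction and are handled by continuity of the line integral in $x_0$. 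Your route is self-contained and elementary; the paper's buys the inversion formula and range description for free.
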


This result relies on a result by Abouelaz and Rouvi\`ere~\cite{AR:radon-torus} in the $n$-torus, and the inversion formula therein immediately gives an inversion formula for the periodic broken ray transform in the cube.
Similarly, the range characterization of~\cite{AR:radon-torus} can be turned into a characterization of the range of the periodic broken ray transform in the cube.

\begin{proof}[Proof of Proposition~\ref{prop:pbrt-cube}]
We define a map $p:[0,2]^n\to[0,1]^n$ by
\begin{equation}
p(x_1,\dots,x_n)
=
(1-\abs{1-x_1},\dots,1-\abs{1-x_n}).
\end{equation}
We make $[0,2]^n$ into a flat $n$-torus by identifying opposite faces of the cube.

Let $f$ be a function in the class of the claim.
The regularity assumption implies that $\rfl{f}=p^*f\in C^n([0,2]^n)$.
The torus is obtained by gluing together reflected copies of the original cube in a natural way, and~$\rfl{f}$ is the corresponding reflection of the function~$f$.

Using the reflection argument used in the proof of Propositions~\ref{prop:ex-cone} and~\ref{prop:ex-S/8} it is easy to observe that if~$\gamma$ is a periodic broken ray in the cube $[0,1]^n$, then each $\rfl{\gamma}\in (p\circ{})^{-1}(\gamma)$ is a closed geodesic in the torus $[0,2]^n=\R^n/(2\Z)^n$.
The integral of $f$ over $\gamma$ yields integrals of~$\rfl{f}$ over~$\rfl{\gamma}$ as in~\eqref{eq:rfl-int}.
Therefore the periodic broken ray transform of~$f$ determines the integral of~$\rfl{f}$ over all closed geodesics of the torus.

Since this information is enough to determine to determine~$\rfl{f}$ (see~\cite{AR:radon-torus}) and $f=\rfl{f}\circ p$, also~$f$ may be recovered.
\end{proof}

We wish to point out the similarity between Proposition~\ref{prop:ex-S/8} and Proposition~\ref{prop:ex-thm}(3).
Similarly Proposition~\ref{prop:pbrt-cube} should be compared with Proposition~\ref{prop:ex-cone}(1), which contains the square (with two adjacent edges as~$E$) as a special case.
The geometrical construction is very similar, but the underlying result for the ray transform on~$S^2$ is quite different.

In addition to the examples of injective periodic broken ray transforms in Propositons~\ref{prop:ex-S/8} and~\ref{prop:pbrt-cube} we also give a counterexample.

\begin{proposition}
\label{prop:ex-disk}
There exists a compactly supported nonvanishing smooth function in the unit disk such that its periodic broken ray transform vanishes.
\end{proposition}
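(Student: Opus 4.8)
The plan is to exploit the rotational symmetry of the disk to build a counterexample by hand, reducing the periodic broken ray transform to a family of one-dimensional integral conditions that are easy to satisfy nontrivially. In the unit disk $D\subset\R^2$, a periodic broken ray is a billiard orbit that closes up; the regular polygonal orbits inscribed in a concentric circle are the simplest such orbits. A periodic broken ray that reflects off $\partial D$ with reflection angle determined by a rational rotation number stays tangent to an inner circle of some fixed radius $r\in[0,1)$ (its \emph{caustic}), and the collection of all periodic broken rays is exactly the union, over rational rotation numbers and over all rotations, of these polygonal orbits. The key structural fact I would use is that each periodic broken ray is a chord polygon tangent to a circle of radius $r$, where $r$ ranges over a countable dense set of admissible radii.

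First I would reduce to \emph{radial} functions. If $f$ is radial, $f(x)=F(\abs{x})$, then the integral of $f$ over any chord at distance $r$ from the center depends only on $r$ (by rotational symmetry), and it equals a fixed transform of $F$,
\begin{equation}
(\radon F)(r) = 2\int_r^1 \frac{F(s)\,s}{\sqrt{s^2-r^2}}\,\der s,
\end{equation}
which is essentially the Abel transform. A periodic broken ray with caustic radius $r$ consists of finitely many such chords, all at the same distance $r$ from the center, so its transform is just a positive integer multiple of $(\radon F)(r)$. Hence for a radial $f$ the entire periodic broken ray transform vanishes if and only if $(\radon F)(r)=0$ for every admissible caustic radius $r$. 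Because the admissible radii form a dense subset of $[0,1)$ and $\radon F$ is continuous, this is equivalent to $(\radon F)(r)=0$ for \emph{all} $r\in[0,1)$.

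The remaining task is therefore to produce a nonzero smooth compactly supported radial profile $F$ with vanishing Abel transform. The Abel transform is injective on functions supported away from the origin only up to this symmetry obstruction, so the hard part is choosing $F$ so that $\radon F\equiv 0$ while keeping $F$ smooth, nonzero, and compactly supported in the open disk. I would take advantage of the fact that $\radon$ annihilates a large space: writing $F(s)=G(s^2)$ and changing variables $u=s^2$, $\radon F$ becomes the classical half-line Abel transform of $G$, whose kernel I can describe explicitly; alternatively I can build $F$ as $F(s)=\Delta u(s)$ for a radial function $u$ supported in a thin annulus, since the Radon transform of a Laplacian vanishes on all lines missing the support, and every periodic-billiard chord can be arranged to miss a suitably placed annulus — but more cleanly, I simply prescribe a nonzero smooth radial $F$ supported in an annulus $\{a<\abs{x}<b\}$ and solve the Volterra equation $\radon F=0$, which forces a one-parameter family of solutions; the main obstacle is verifying that a solution exists that is genuinely smooth and does not vanish identically, which follows because the Abel operator has an infinite-dimensional kernel once one allows the natural parity/symmetry freedom, and one selects a representative supported strictly inside $D$. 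This yields the desired nonvanishing smooth $f$ with vanishing periodic broken ray transform.
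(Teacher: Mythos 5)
There is a genuine gap: the radial ansatz cannot produce a counterexample. Your reduction is correct as far as it goes---every periodic billiard orbit in the disk with rotation number $p/q$ consists of $q$ chords at the common distance $r=\cos(\pi p/q)$ from the centre, these radii are dense in $[0,1)$, and for a radial $f(x)=F(\abs{x})$ the vanishing of the periodic broken ray transform is therefore equivalent, by continuity, to $(\radon F)(r)=0$ for all $r\in[0,1)$. But the chord transform you wrote down is \emph{injective} on continuous functions: substituting $u=s^2$, $v=r^2$ turns it into the classical Abel integral operator $G\mapsto\int_v^1 G(u)(u-v)^{-1/2}\,\der u$, a Volterra operator of the first kind with an explicit inversion formula and trivial kernel. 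Hence $\radon F\equiv0$ forces $F=0$; the ``infinite-dimensional kernel'' and the ``parity/symmetry freedom'' you invoke do not exist here. This is consistent with the remark following the proposition: the periodic broken ray transform in the disk determines the integral of $f$ over every circle centred at the origin, which for a radial function determines $f$ completely. The side remark about taking $F=\Delta u$ also does not help: the line integral of $\Delta u$ over a line meeting $\spt u$ is the second derivative, in the affine parameter, of the line integrals of $u$, which need not vanish, and lines missing $\spt u$ carry no information.

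The counterexample must be non-radial, and the paper's proof builds one in the first angular Fourier mode: take $f(r,\theta)=g(r)\cos\theta$ with $g$ smooth, nonzero and compactly supported in $(0,1)$. For a given periodic orbit one rotates coordinates so that a reflection point sits at angle zero; the $\sin$-component integrates to zero by the reflection symmetry of the orbit, and the remaining $\cos$-component is handled by a result of~\cite{I:disk} stating that $g(r)\cos\theta$ integrates to zero over every broken ray with at least two reflections, one of them at angle zero---and every periodic orbit in the disk has at least two reflections. The moral is that the kernel of this transform lives precisely in the nonzero angular modes, i.e.\ in the part your radial reduction discards.
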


\begin{proof}
Let $g:(0,1)\to\R$ be a nonzero smooth function with compact support and define $f(r,\theta)=g(r)\cos(\theta)$ in polar coordinates.
Now~$f$ is smooth in the unit disk, and we wish to show that its integral vanishes over every periodic broken ray.

For a fixed broken ray, rotate the coordinates so that one point of reflection is at angle zero.
If the needed rotation angle is~$\phi$, we have in the new polar coordinates $(r,\theta')=(r,\theta+\phi)$
\begin{equation}
f(r,\theta')
=
g(r)
[\cos(\theta')\cos(\phi)-\sin(\theta')\sin(\phi)].
\end{equation}
The second term is antisymmetric with respect to the reflection $\theta'\mapsto-\theta'$ but the broken ray is symmetric.
Thus the integral of the second term vanishes over the broken ray.
The remaining term has (apart from the constant $\cos(\phi)$) the same form as the original function $f(r,\theta)=g(r)\cos(\theta)$.
It therefore suffices to show that the integral of~$f$ vanishes over any broken ray with one reflection at angle zero.

By~\cite[corollary~13]{I:disk} the integral of~$f$ over any such broken ray with two or more reflections vanishes.
In a disk any periodic broken ray has at least two reflections.
\end{proof}

Some information can, however, be recovered from the periodic broken ray transform of a continuous function in the disk.
Although Proposition~\ref{prop:ex-disk} prohibits full reconstruction, we can construct the function at the origin and its integral over any circle centered at the origin~\cite[theorem~1]{I:disk}.

\section*{Acknowledgements}
The author is partly supported by the Academy of Finland (no 250~215).
The author wishes to thank Mikko Salo for discussions regarding this article and the referee for useful feedback.
Part of the work was done during a visit to the Institut Mittag-Leffler (Djursholm, Sweden).

\bibliographystyle{mscplain}
\bibliography{manifold-refl-arxiv}

\end{document}